\setlist[enumerate]{leftmargin=7mm, label=\alph*)}
\newcommand{\PP}{\mathbb{P}}
\newcommand{\RR}{\mathbb{R}}
\newcommand{\KK}{\mathbb{K}}
\newcommand{\CC}{\mathbb{C}}
\newcommand{\NN}{\mathbb{N}}
\newcommand{\ZZ}{\mathbb{Z}}
\newcommand{\QQ}{\mathbb{Q}}
\newcommand{\conv}{\textnormal{conv}}
\providecommand{\keywords}[1]
{
  \small	
  \textbf{\textit{Keywords---}} #1
}
\definecolor{codedarkgreen}{RGB}{51, 133, 4}
\definecolor{codemaroon}{RGB}{133, 5, 63}
\definecolor{codeteal}{RGB}{0, 128, 96}
\lstdefinelanguage{Macaulay2}{
        classoffset=1,
        keywords={radical, minors, matrix, gens, Ideal, map, kernel, needsPackage, i1, i2, i3, i4},
        keywordstyle={\color{blue}},
        classoffset=2,
        morekeywords={from, to, list},
        keywordstyle={\color{codemaroon}},
        classoffset=3,
        morekeywords={QQ},
        keywordstyle={\color{codedarkgreen}},
        classoffset=4,
        morekeywords={MonomialOrder, List},
        keywordstyle={\color{codeteal}}
}
\lstdefinelanguage{Julia}{
frame=single,	 
numbers=right,
classoffset=1,
keywords={using, mixed_volume},
keywordstyle={\color{blue}}
}
\setlist[enumerate]{itemsep=1mm}
\theoremstyle{plain}
\newtheorem{theorem}{Theorem}[section]
\theoremstyle{definition}
\newtheorem{definition}[theorem]{Definition} 
\newtheorem{example}[theorem]{Example}
\theoremstyle{remark}
\newtheorem*{remark}{Remark}
\newcommand{\tim}[1]{\textcolor{blue}{#1}}
\DeclareMathOperator{\argmax}{argmax}
\theoremstyle{plain}
\newtheorem{thm}{Theorem}[section]
\newtheorem{lem}[thm]{Lemma}
\newtheorem{cor}[thm]{Corollary}
\newtheorem{prop}[thm]{Proposition}
\theoremstyle{definition}
\newtheorem{rem}[thm]{Remark}
\def\rin{\rotatebox[origin=c]{90}{$\vDash$}}
\begin{document}

\title[SAGBI and Gr\"obner Bases Detection]{SAGBI and Gr\"obner Bases Detection}
\keywords{SAGBI basis, Gröbner basis, Gröbner fan, computational complexity}
\subjclass[2020]{Primary 13P10;\  Secondary 68Q15, 14M25}

\author{Viktoriia Borovik} 
\thanks{V.B., E. Sh.: supported by the DFG grant 445466444.}
\author{Timothy Duff} 
\thanks{T.D.: supported by NSF DMS-2103310}
\author{Elima Shehu}
\date{}

\maketitle
\thispagestyle{empty}

\begin{abstract}
    We introduce a detection algorithm for SAGBI basis in polynomial rings, analogous to a Gröbner basis detection algorithm previously proposed by Gritzmann and Sturmfels. 
    We also present two accompanying software packages named \texttt{SagbiGbDetection} for \texttt{Macaulay2} and \texttt{Julia}. Both packages allow the user to find one or more term orders for which a set of input polynomials form either Gr\"obner basis for the ideal they generate or a SAGBI basis for the subalgebra. 
    Additionally, we investigate the computational complexity of homogeneous SAGBI detection and apply our implementation to several novel~examples.
\end{abstract}

\section{Introduction}

Gr\"{o}bner bases, first formalized by Buchberger~\cite{BUCHBERGER2006475} in~1965, are nowadays considered fundamental to both pure and applied algebra, as they provide algorithmic solutions to many different computational problems involving multivariate polynomials: for example, solving polynomial systems of equations, ideal membership/equality, and implicitization for polynomial or rational maps.
We review basic theoretical ingredients in~\Cref{sec2}, and refer to~\cite{CoxLittleOShea} for a standard introduction.
Throughout this paper, we work in the multivariate polynomial ring $R\coloneqq \KK[x_1,\dots,x_n]$ over a field $\KK $ in commuting variables $x_1, \ldots , x_n.$
The classical algorithm for computing Gr\"{o}bner bases is Buchberger's algorithm; its input is given by a finite set of polynomials in $R,$ which generate an ideal $I\subset R$.
A Gr\"{o}bner basis for $I$ is a special generating set for $I$---typically \emph{not} the input generators---which depends, at least \emph{a priori}, on the choice of a term order $\succ $ on $R.$

Suppose an ideal $I$ is given as input by generators $f_1, \ldots , f_s \in R,$ in which case we write $I = \langle f_1, \ldots , f_s \rangle .$
Prior to computing a Gr\"obner basis for $I$, a reasonable question arises: does the set $\mathcal{F} = \{ f_1, \ldots , f_s \}$ already form a Gr\"obner basis for \emph{some} term order?
An effective algorithm solving this decision problem was first given by Gritzmann and Sturmfels~\cite{GriStu93}, and we revisit their solution in this article.
We recall that for any \emph{fixed} term order $\succ ,$ Buchberger's $S$-pair criterion allows us to decide if $\mathcal{F}$ forms a Gr\"{o}bner basis with respect to $\succ$; however, there are infinitely-many term orders on $R$ as soon as $n\ge 2.$


Just as Gr\"obner bases allow us to carry out computations with polynomial ideals, the notion of a SAGBI basis for a finitely generated subalgebra~$S\subset R$, introduced independently in \cite{robbiano2006subalgebra} and \cite{kapur}, may give us valuable information about~$S$. 
For example, we have the following solution to the algebra membership problem: given a SAGBI basis $\mathcal{F} = \{ f_1, \ldots, f_s \}$ for a subalgebra~$S \subseteq R$ with respect to the term order~$\succ$, we can determine whether the polynomial~$g \in R$ belongs to~$S$ by computing the \emph{normal form} $r$ of~$g$ with respect to $\mathcal{F}$ and $\succ .$
This means 
\begin{equation}\label{normalform}
    g = q(f_1, \dots, f_s) + r, \quad q \in \KK [y_1, \ldots , y_s],
\end{equation}
where $y_1, \ldots , y_s$ are new variables, either~$\mathrm{in}_{\succ}(r) \notin \mathrm{in}_{\succ}(S)$, or~$r = 0$, and none of the monomials present in~$r$ belongs to~$\mathrm{in}_{\succ}(S)$. 
The polynomial~$q$ and normal form~$r$, as they appear in the equation \eqref{normalform}, can be computed using an adaptation of the multivariate division algorithm known as the \emph{subduction algorithm}. Pseudocode for subduction may be found in~\cite[Algorithm 11.1]{sturmfels1996grobner} or~\cite[Algorithm 1.5]{robbiano2006subalgebra}.

SAGBI bases are of general interest in algebra, as they provide a natural generalization of classical notions from invariant theory such as the straightening algorithm.
More recently, they have been studied in connection with Newton-Okounkov bodies and toric degenerations.
As shown in~\cite{KhovHomotopy}, these connections can be applied to the development of a novel numerical homotopy continuation method for solving structured systems of polynomial equations.
The setting of this numerical method is as follows: suppose we wish to solve a system of $n$ equations in $n$ unknowns, and each equation is a general linear combination of some fixed set of polynomials $f_1,\dots,f_s \in R$. 
These polynomials give a rational parametrization of a projective variety ~$X \subseteq \PP^{s-1}.$ If~$tf_1,\dots,tf_s\in S = R[t]$ form a SAGBI basis for the subalgebra they generate for an appropriate term order on $S$, then one can construct a one-parameter flat family of varieties in which the general fiber is isomorphic to~$X$ and the special fiber is a toric variety. In this family, the subalgebra generators degenerate into a set of monomials.
The resulting system may have fewer solutions than estimates based on the Bernshtein-Kushnirenko theorem, which may then allow the original system to be solved more efficiently than by the standard polyhedral homotopy method~\cite{HUBER1998767} by tracking fewer solution~paths.

Motivated by the applications above, we consider here the natural problem of \emph{SAGBI basis detection}, and provide analogues of the results in~\cite{GriStu93}.
Specifically, we provide a new SAGBI basis detection method (Algorithm~\ref{alg:SAGBI_detection}), and prove its correctness (Theorem~\ref{cor2}.)
The result is perhaps surprising, since the theory of SAGBI bases is not completely algorithmic.
Additionally, we analyze the complexity of SAGBI basis detection in the special case of homogeneous algebras, which is applicable to the polynomial system of solving applicaiton outlined above.
Finally, we provide new implementations of both algorithms for Gr\"obner basis and SAGBI detection for both the computer algebra system \texttt{Macaulay2}~\cite{M2}
and the numerical computing language \texttt{Julia}~\cite{bezanson2017julia}.
We illustrate our implementations on a variety of examples coming from applications.

\newpage 

\subsection*{Outline.} In Section \ref{sec2}, we recall some necessary background. In Section \ref{sec3}, we give a known result of Gritzmann and Sturmfels on Gröbner basis detection and analogously present an algorithm for SAGBI detection. We then discuss the complexity of these algorithms, as well as the question of an ``optimal" term order when the generators do not form a SAGBI basis for any term order. In Section~\ref{sec4}, we describe the functionality of our packages and give basic examples of their utilization. Finally, in Section \ref{sec5}, we discuss more involved problems that can be solved with the given algorithms.

\section{Background}\label{sec2}
For brevity, let us now write~$\KK[\mathbf{x}] = \KK [x_1, \ldots , x_n]$ for a polynomial ring in~$n$ indeterminates. For any given term order~$\succ$, every non-zero polynomial~$f \in \KK[\mathbf{x}]$ has a unique leading monomial, which is denoted as~$\mathrm{in}_{\succ}(f)$ and is determined by the chosen order~$\succ$. For an ideal~$I \subseteq \KK[\mathbf{x}]$, its initial ideal is defined as 
\begin{equation}\label{eq:initial-ideal}
\mathrm{in}_{\succ}(I):= \langle \mathrm{in}_{\succ}(f)\mid f\in I\rangle.
\end{equation}
A finite subset~$\mathcal{G}\subset I$ is called a \emph{Gr\"obner basis} for~$I$ with respect to~$\succ$ if~$\mathrm{in}_{\succ}(I)$ is generated by~$\{\mathrm{in}_{\succ}(g) \mid g \in \mathcal{G}\}$. 
The existence of Gr\"{o}bner basis for any ideal $I$ and term order $\succ $ follows from Hilbert's Basis Theorem, see eg.~\cite[Ch.2, Corollary 6]{CoxLittleOShea}.

An analogous notion exists for a finitely generated subalgebra~$S\subset \KK[\mathbf{x}]$.
We define the initial~algebra of $S$ to be
\begin{equation}\label{eq:initial-algebra}
\mathrm{in}_{\succ}(S):= \KK[ \mathrm{in}_{\succ}(f)\mid f\in S].
\end{equation}
\begin{definition}
A set of polynomials~$\mathcal{F}\subset S$ is called a \emph{SAGBI basis} if the leading monomials of the elements in~$\mathcal{F}$ generate the initial
algebra~$\mathrm{in}_{\succ}(S)$.    
\end{definition}
\noindent
In contrast to the well-known situation for Gr\"{o}bner bases of ideals, there is currently no known algorithmic criterion for determining if there exists a term order for which $S$ has a finite SAGBI basis. 
It should be noted that the initial algebra of a finitely generated subalgebra may not necessarily be finitely generated itself. One well-known example is the algebra 
$\KK[x+y, xy, xy^2] \subset \KK[x, y]$, which does not admit a finite SAGBI basis for any term order.

\subsection{Preliminaries on weight orders}
We turn to the representation of term orders by weight vectors. 
For any weight vector $\omega \in \RR^n$ and $f\in \KK [\mathbf{x}],$ we write $\mathrm{in}_{\omega} (f)$ for the sum of all terms $c_{\alpha } x^{\alpha}$ appearing in $f$ such hat the dot product $\langle \alpha , \omega \rangle $ is maximized.
Analogously to~\eqref{eq:initial-ideal}, $\mathrm{in}_{\omega } (I)$ is the ideal generated by $\mathrm{in}_{\omega} (f)$ for all $f\in I.$
For any given ideal or algebra, the term order used to define the initial ideal or algebra can be represented concretely by a weight vector, as specified in the next two lemmas.
\begin{lem}\cite[Proposition 1.11]{sturmfels1996grobner}
    For any term order~$\succ$ and any ideal~$I \subseteq \KK[\mathbf{x}]$, there exists some weight~$\omega \in \RR^n_{\geq 0}$ such that~$\mathrm{in}_{\succ}(I) = \mathrm{in}_{\omega} (I)$.
\end{lem}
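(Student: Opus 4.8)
The plan is to reduce the statement to a finite problem via a Gröbner basis and then realize the relevant monomial comparisons by a single weight vector. First, by Hilbert's Basis Theorem, fix the reduced Gröbner basis $\mathcal{G} = \{g_1, \ldots, g_m\}$ of $I$ with respect to $\succ$, so that $\mathrm{in}_{\succ}(I) = \langle \mathrm{in}_{\succ}(g_1), \ldots, \mathrm{in}_{\succ}(g_m)\rangle$. Writing each $g_i = x^{a_i} + \sum_j c_{ij} x^{b_{ij}}$ with $x^{a_i} = \mathrm{in}_{\succ}(g_i)$, the goal becomes to find a single $\omega \in \RR^n_{\geq 0}$ that simultaneously selects the leading term of every $g_i$, i.e. $\langle \omega, a_i - b_{ij}\rangle > 0$ for all $i,j$. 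To force $\omega$ into the nonnegative orthant (in fact the positive one), I would also append the comparisons $x_\ell \succ 1$, i.e. the inequalities $\langle \omega, e_\ell \rangle > 0$ for $\ell = 1, \ldots, n$, which are legitimate since $1$ is the $\succ$-minimal monomial. All of these together form finitely many strict linear inequalities in $\omega$.

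The main obstacle is to prove that this finite system of strict inequalities is feasible; everything else is bookkeeping. I would establish feasibility by a Gordan-type alternative: either the system $\langle \omega, v_k\rangle > 0$ (ranging over the finitely many integer difference vectors $v_k$ collected above) has a solution, or there is a nonzero nonnegative combination $\sum_k \lambda_k v_k = 0$. Since the $v_k$ lie in $\ZZ^n$, in the latter case the $\lambda_k$ may be cleared to nonnegative integers, and $\sum_k \lambda_k v_k = 0$ translates into an equality of monomials $\prod_k (x^{\mathrm{hi}_k})^{\lambda_k} = \prod_k (x^{\mathrm{lo}_k})^{\lambda_k}$, where each $x^{\mathrm{hi}_k} \succ x^{\mathrm{lo}_k}$ by construction. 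But a term order is compatible with multiplication, so multiplying these strict inequalities factor by factor yields $\prod_k (x^{\mathrm{hi}_k})^{\lambda_k} \succ \prod_k (x^{\mathrm{lo}_k})^{\lambda_k}$, contradicting the equality. Hence the first alternative holds, and because the defining inequalities have rational coefficients, the resulting open cone contains a rational, hence after scaling an integral, point $\omega \in \RR^n_{> 0}$.

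It remains to verify that such an $\omega$ actually satisfies $\mathrm{in}_{\omega}(I) = \mathrm{in}_{\succ}(I)$. By construction $\mathrm{in}_{\omega}(g_i) = x^{a_i} = \mathrm{in}_{\succ}(g_i)$ for each $i$, so immediately $\mathrm{in}_{\succ}(I) = \langle \mathrm{in}_{\omega}(g_i)\rangle \subseteq \mathrm{in}_{\omega}(I)$. For the reverse inclusion I would introduce the refined term order $\succ_{\omega}$ that breaks ties of $\omega$ by $\succ$; one checks termwise that $\mathrm{in}_{\succ}(\mathrm{in}_{\omega}(f)) = \mathrm{in}_{\succ_{\omega}}(f)$ for every $f$, and hence $\mathrm{in}_{\succ}(\mathrm{in}_{\omega}(I)) = \mathrm{in}_{\succ_{\omega}}(I)$. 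Because $\omega$ lies in the interior of the Gröbner cone of $\succ$, the set $\mathcal{G}$ remains a Gröbner basis with respect to $\succ_{\omega}$ (its leading terms, and therefore its $S$-polynomial reductions, are unchanged), so $\mathrm{in}_{\succ_{\omega}}(I) = \mathrm{in}_{\succ}(I)$ and thus $\mathrm{in}_{\succ}(\mathrm{in}_{\omega}(I)) = \mathrm{in}_{\succ}(I)$. A short standard-basis argument then closes the gap: writing $K = \mathrm{in}_{\succ}(I)$, we have $K \subseteq \mathrm{in}_{\omega}(I)$ and $\mathrm{in}_{\succ}(\mathrm{in}_{\omega}(I)) = K$, so any element of $\mathrm{in}_{\omega}(I) \setminus K$ of $\succ$-minimal leading term can be reduced against the generators of $K$ (which lie in $\mathrm{in}_{\omega}(I)$), contradicting minimality; hence $\mathrm{in}_{\omega}(I) = K = \mathrm{in}_{\succ}(I)$.

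I expect the Gordan/multiplicativity step to be the genuine crux, as it is precisely where the defining property of a term order, namely invariance under multiplication by monomials, enters. By contrast, the passage from the finitely many monomial comparisons to the full ideal is comparatively routine once the refinement identity $\mathrm{in}_{\succ}\circ\mathrm{in}_{\omega} = \mathrm{in}_{\succ_{\omega}}$ and the stability of $\mathcal{G}$ as a Gröbner basis under $\succ_{\omega}$ are in hand.
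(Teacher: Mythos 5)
The paper gives no proof of this lemma at all—it is quoted verbatim from Sturmfels, Proposition 1.11—and your argument is a correct reconstruction of that standard proof: pass to the reduced Gr\"obner basis, realize the finitely many leading-term comparisons (together with $x_\ell \succ 1$) by a single weight via a Gordan-type alternative whose infeasibility case is killed by multiplicativity of the term order, and then lift from the generators to the whole ideal via the refinement $\succ_{\omega}$. The one step you state more briskly than it deserves is that $\mathcal{G}$ remains a Gr\"obner basis for $\succ_{\omega}$: ``its $S$-polynomial reductions are unchanged'' is not literally an argument, since division with respect to a different order may take different steps; the clean justification is that $\mathrm{in}_{\succ}(I) = \langle \mathrm{in}_{\succ_{\omega}}(g_i)\rangle \subseteq \mathrm{in}_{\succ_{\omega}}(I)$ and two monomial initial ideals of the same ideal cannot be properly nested (their standard monomials both form a $\KK$-basis of $R/I$), which closes the gap without touching Buchberger's criterion.
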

\begin{lem}\cite[Lemma 1.5.6]{Bruns2}
Given a finitely generated subalgebra~$S \subseteq \KK[\mathbf{x}]$, such that~$S$ admits a finite SAGBI basis with respect to $\succ$, there exists a weight~$\omega$ such that~$\mathrm{in}_{\succ} (S) = \mathrm{in}_{\omega} (S)$.
\end{lem}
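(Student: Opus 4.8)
The plan is to mimic the proof of the preceding lemma for ideals (namely \cite[Proposition 1.11]{sturmfels1996grobner}), replacing Gr\"obner-basis reasoning by subduction. Fix a finite SAGBI basis $\mathcal{F} = \{f_1, \ldots, f_s\}$ of $S$ with respect to $\succ$, and write $x^{\alpha_i} = \mathrm{in}_{\succ}(f_i)$, so that by definition $\mathrm{in}_{\succ}(S) = \KK[x^{\alpha_1}, \ldots, x^{\alpha_s}]$. The first step is to produce a weight reproducing these leading monomials: I would impose, for every index $i$ and every exponent $\beta \neq \alpha_i$ of a monomial occurring in $f_i$, the strict inequality $\langle \alpha_i - \beta, \omega \rangle > 0$. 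This is a \emph{finite} system of strict linear inequalities in $\omega$, and its consistency is exactly the input used in the ideal case: since $\succ$ is a term order, its restriction to the finite set of exponents occurring in $f_1, \ldots, f_s$ is induced by a weight vector (by Robbiano's characterization of term orders, or a direct Farkas-lemma argument). Any solution $\omega$ then satisfies $\mathrm{in}_{\omega}(f_i) = x^{\alpha_i}$ as a \emph{single} monomial for each $i$; after clearing denominators and translating, $\omega$ may be taken integral and, if needed, nonnegative.

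One inclusion is then immediate. Since $\mathrm{in}_{\omega}(f_i) = x^{\alpha_i}$ lies in $\mathrm{in}_{\omega}(S)$ and the latter is a $\KK$-algebra, it contains $\KK[x^{\alpha_1}, \ldots, x^{\alpha_s}] = \mathrm{in}_{\succ}(S)$. Thus $\mathrm{in}_{\succ}(S) \subseteq \mathrm{in}_{\omega}(S)$, and it remains to prove the reverse containment.

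For the reverse inclusion I would pass to the refined term order $\succ'$ that ranks monomials first by their $\omega$-weight and breaks ties by $\succ$, so that $\mathrm{in}_{\succ'}(f) = \mathrm{in}_{\succ}(\mathrm{in}_{\omega}(f))$ for every $f$, and correspondingly (by the standard iterated-initial identity) $\mathrm{in}_{\succ'}(S) = \mathrm{in}_{\succ}(\mathrm{in}_{\omega}(S))$. Because $\mathrm{in}_{\omega}(f_i)$ is the single monomial $x^{\alpha_i}$, we get $\mathrm{in}_{\succ'}(f_i) = x^{\alpha_i} = \mathrm{in}_{\succ}(f_i)$; since the leading monomials of the generators are unchanged, the associated toric relations and the whole subduction bookkeeping coincide for $\succ$ and $\succ'$, so $\mathcal{F}$ is again a SAGBI basis with respect to $\succ'$ and $\mathrm{in}_{\succ'}(S) = \KK[x^{\alpha_1}, \ldots, x^{\alpha_s}] = \mathrm{in}_{\succ}(S)$. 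Combining the two identities gives $\mathrm{in}_{\succ}(\mathrm{in}_{\omega}(S)) = \mathrm{in}_{\succ}(S)$. Finally I would upgrade $\mathrm{in}_{\succ}(S) \subseteq \mathrm{in}_{\omega}(S)$ to equality by a dimension count: passing to an initial subalgebra is a flat degeneration and preserves the graded dimensions attached to the $\omega$-filtration, so the inclusion together with $\mathrm{in}_{\succ}(\mathrm{in}_{\omega}(S)) = \mathrm{in}_{\succ}(S)$ forces $\mathrm{in}_{\omega}(S) = \mathrm{in}_{\succ}(S)$.

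The main obstacle is precisely this last equality. In the ideal setting it follows from Hilbert functions of homogeneous ideals, but here $S$ need not be graded, so the count must be phrased through the filtration by $\omega$-degree and the identity $\dim_{\KK} S_{\le d} = \dim_{\KK} \mathrm{in}_{\omega}(S)_{\le d}$; making this rigorous requires $\omega$ to define a genuine exhaustive filtration with finite-dimensional pieces, which is why controlling the sign of $\omega$ in the first step is relevant. The secondary point needing care is the claim that $\mathcal{F}$ remains a SAGBI basis under $\succ'$: I would justify it by observing that each subduction step only compares leading monomials of the intermediate iterates, and these agree for $\succ$ and $\succ'$ exactly because the generators' leading terms agree, so any subduction terminating in zero for $\succ$ does so for $\succ'$ as well.
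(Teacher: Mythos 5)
First, a point of reference: the paper does not prove this lemma at all --- it is quoted from \cite[Lemma 1.5.6]{Bruns2} --- so your attempt has to be judged against the standard argument rather than against anything in the text. Your first half is correct and is the right starting point: realizing $\succ$ by a weight $\omega$ on the finitely many monomials occurring in $f_1,\dots,f_s$ (this is the same device as \cite[Proposition 1.11]{sturmfels1996grobner}), and the inclusion $\mathrm{in}_{\succ}(S)=\KK[x^{\alpha_1},\dots,x^{\alpha_s}]\subseteq \mathrm{in}_{\omega}(S)$ is immediate as you say.

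The reverse inclusion is where the gaps are. The pivotal claim that $\mathcal{F}$ remains a SAGBI basis for the refinement $\succ'$ is true, but your justification --- that each subduction step only compares leading monomials of the intermediate iterates and ``these agree for $\succ$ and $\succ'$ because the generators' leading terms agree'' --- does not hold: the intermediate iterates are arbitrary elements of $S$, and two term orders agreeing on the finitely many monomials of the $f_i$ can disagree on which monomial of such an iterate is leading (e.g.\ an iterate $x^a-x^b$ with $x^a\succ x^b$ but $\langle\omega,b\rangle>\langle\omega,a\rangle$). The fact you need is essentially Corollary~\ref{cor_cones} of the paper, which is proved there via reduced SAGBI bases, not by rerunning subduction. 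Likewise, the ``iterated initial'' identity $\mathrm{in}_{\succ'}(S)=\mathrm{in}_{\succ}(\mathrm{in}_{\omega}(S))$ for subalgebras is an unproved ingredient, and your closing dimension count is, as you yourself note, incomplete: ``translating'' $\omega$ to make it nonnegative changes initial forms unless everything is homogeneous, and the filtration argument needs strictly positive $\omega$ to have finite-dimensional pieces. All of this detour can be avoided. Since $\mathcal{F}$ is a SAGBI basis for $\succ$, subduction writes any $g\in S$ as $g=\sum_u c_u f_1^{u_1}\cdots f_s^{u_s}$ where the exponents $Au$ of the $\succ$-leading monomials $x^{Au}$ of the summands are pairwise distinct (the leading monomial strictly decreases at each subduction step). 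Because $\mathrm{in}_{\omega}(f_i)=x^{\alpha_i}$ is a single monomial, every non-leading monomial of the summand indexed by $u$ has $\omega$-degree strictly below $\langle\omega,Au\rangle$; hence, setting $D=\max_u\langle\omega,Au\rangle$, the $\omega$-degree-$D$ part of $g$ is exactly $\sum_{u\,:\,\langle\omega,Au\rangle=D}c_u x^{Au}$, a nonzero sum of distinct monomials. Therefore $\mathrm{in}_{\omega}(g)\in\KK[x^{\alpha_1},\dots,x^{\alpha_s}]=\mathrm{in}_{\succ}(S)$, which gives $\mathrm{in}_{\omega}(S)\subseteq\mathrm{in}_{\succ}(S)$ directly, with no need for $\succ'$, iterated initial algebras, or Hilbert-function arguments.
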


Next, we introduce an equivalence relation on weight orders with respect to an ideal or a finitely generated subalgebra.
\begin{definition}
We call two weights~$\omega$ and $\omega'$ equivalent with respect to an ideal~$I$  if and only if~$\mathrm{in}_{\omega}(I) = \mathrm{in}_{\omega'}(I)$. We write $\omega\sim_{I} \omega'$.

Much in the same way, we say two weights~$\omega, \omega'$ are equivalent with respect to a subalgebra~$S$  if and only if~$\mathrm{in}_{\omega}(S) = \mathrm{in}_{\omega'}(S)$. We write $\omega \sim_{S} \omega'$.
\end{definition}
\noindent
For more details on the weight representation of term orders, see \cite[Chapter 1]{sturmfels1996grobner}.

\subsection{Criteria for Gröbner and SAGBI bases}
Buchberger's well-known $S$-pair criterion (see, e.g., \cite[Ch.2, Theorem 6]{CoxLittleOShea}) gives necessary and sufficient conditions to verify whether a given set of polynomials forms a Gröbner basis. 

We now recall the SAGBI analog.
\begin{definition}\label{toric_ideal}
    Let~$\mathcal{F} = \{ f_1,\dots,f_s\}$. The following toric ideal
   ~$$\langle P \in \KK[y_1,\dots,y_s]  \mid P(\mathrm{in}_{\succ}(f_1),\dots,\mathrm{in}_{\succ}(f_s)) = 0\rangle$$
 is called the ideal of algebraic relations between leading monomials of~$f_i$'s. Every polynomial $P(y_1,\dots,y_s)$ in this ideal produces a~polynomial~$P(\mathcal{F}):=P(f_1,\dots,f_s)\in S$ by substituting~$f_i$ for $y_i$, which we call an \emph{S-polynomial} of~$\mathcal{F}$.
\end{definition}
The following theorem gives us a criterion to check whether a given set of polynomials is a SAGBI basis for a specified term order. 
\begin{theorem}\cite[Theorem 2.8]{robbiano2006subalgebra}\label{sagbi_crit}
Given~$S = \KK[\mathcal{F}] \subset \KK[\mathbf{x}]$,~$\mathcal{F}$ is a SAGBI basis of $S$ if and only if every S-polynomial has $\mathcal{F}$-remainder $r=0$ as in eq.~\eqref{normalform}.
\end{theorem}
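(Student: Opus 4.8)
The plan is to establish the two implications separately. Throughout, write $A \coloneqq \KK[\mathrm{in}_{\succ}(f_1), \ldots, \mathrm{in}_{\succ}(f_s)]$ for the subalgebra generated by the leading monomials, so that $\mathcal{F}$ is a SAGBI basis exactly when $\mathrm{in}_{\succ}(S) = A$; the inclusion $A \subseteq \mathrm{in}_{\succ}(S)$ is automatic since each $\mathrm{in}_{\succ}(f_i) \in \mathrm{in}_{\succ}(S)$. For a multi-index $\beta \in \NN^s$ I abbreviate $\mathbf{f}^{\beta} \coloneqq f_1^{\beta_1}\cdots f_s^{\beta_s}$ and $\mathrm{in}_{\succ}(\mathbf{f})^{\beta} \coloneqq \mathrm{in}_{\succ}(f_1)^{\beta_1}\cdots \mathrm{in}_{\succ}(f_s)^{\beta_s}$; multiplicativity of leading terms gives $\mathrm{in}_{\succ}(\mathbf{f}^{\beta}) = \mathrm{in}_{\succ}(\mathbf{f})^{\beta} \in A$.

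For the forward direction, suppose $\mathcal{F}$ is a SAGBI basis. Every S-polynomial $P(\mathcal{F})$ lies in $S$, so it suffices to show that subduction sends any nonzero $h \in S$ to remainder $0$. Since $\mathrm{in}_{\succ}(h) \in \mathrm{in}_{\succ}(S) = A$, one subduction step subtracts a suitable scalar multiple of some $\mathbf{f}^{\gamma}$, keeping us inside $S$ while strictly lowering the leading monomial. Because $\succ$ well-orders the monomials, this descent terminates, and since no term is ever deposited into the remainder, it terminates at $0$.

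The backward direction is the substantial part. Assume every S-polynomial subducts to $0$; I must show $\mathrm{in}_{\succ}(h) \in A$ for every nonzero $h \in S$. Write $h = \sum_{\beta} c_{\beta}\,\mathbf{f}^{\beta}$ and, using that $\succ$ is a well-order, choose among all such representations one that minimizes $m \coloneqq \max_{\beta} \mathrm{in}_{\succ}(\mathbf{f})^{\beta}$. Let $B = \{\beta : c_{\beta} \neq 0,\ \mathrm{in}_{\succ}(\mathbf{f})^{\beta} = m\}$. If $\sum_{\beta \in B} c_{\beta} \neq 0$, the top terms do not cancel, so $\mathrm{in}_{\succ}(h) = m \in A$ and we are done. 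Otherwise $P \coloneqq \sum_{\beta \in B} c_{\beta}\,\mathbf{y}^{\beta}$ satisfies $P(\mathrm{in}_{\succ}(f_1), \ldots, \mathrm{in}_{\succ}(f_s)) = \big(\sum_{\beta \in B} c_{\beta}\big) m = 0$, hence $P$ lies in the ideal of algebraic relations of \Cref{toric_ideal} and $P(\mathcal{F})$ is an S-polynomial. By hypothesis it subducts to $0$, giving a representation $P(\mathcal{F}) = \sum_{\gamma} d_{\gamma}\,\mathbf{f}^{\gamma}$ in which every $\mathrm{in}_{\succ}(\mathbf{f})^{\gamma} \prec m$. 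Substituting into $h = \sum_{\beta \notin B} c_{\beta}\,\mathbf{f}^{\beta} + P(\mathcal{F})$ yields a representation of $h$ whose maximal leading monomial is strictly below $m$, contradicting minimality. Thus the cancellation case is impossible, $\mathrm{in}_{\succ}(h) \in A$, and $\mathrm{in}_{\succ}(S) = A$.

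I expect the main obstacle to be this final descent step: one must verify that subduction-to-$0$ of the S-polynomial returns only products $\mathbf{f}^{\gamma}$ whose leading monomials lie strictly below $m$. This holds because the cancellation forces $\mathrm{in}_{\succ}(P(\mathcal{F})) \prec m$, and each subduction step subtracts a product matching the current, already smaller, leading monomial; it is precisely this strict drop that makes the rebuilt representation smaller and powers the well-ordered induction. The remaining ingredients—multiplicativity of leading terms, the fact that $\succ$ well-orders the monomials, and that $P$ genuinely belongs to the relation ideal—are routine.
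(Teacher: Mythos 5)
The paper does not prove this statement at all---it is imported verbatim as \cite[Theorem 2.8]{robbiano2006subalgebra}---so there is no internal proof to compare against; what you have written is a self-contained proof of the cited result, and it is essentially the standard argument (the SAGBI analogue of the ``rewrite a minimal representation'' proof of Buchberger's criterion, which is also how Robbiano and Sweedler argue). Your two directions are sound: the forward direction correctly uses $\mathrm{in}_{\succ}(S)=A$ to show subduction of any $h\in S$ never stalls and must terminate by the well-ordering of $\succ$; the backward direction correctly picks a representation $h=\sum_\beta c_\beta\,\mathbf{f}^{\beta}$ minimizing the top monomial $m$, extracts a relation $P$ from the cancelling top layer, and uses the hypothesis to rewrite $P(\mathcal{F})$ with strictly smaller top monomial, contradicting minimality. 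The one point to tidy up is a normalization: with $\mathrm{in}_{\succ}(f_i)$ denoting the leading \emph{monomial}, the coefficient of $m$ in $h$ is $\sum_{\beta\in B}c_\beta\prod_i \mathrm{lc}(f_i)^{\beta_i}$, not $\sum_{\beta\in B}c_\beta$, so your dichotomy ``either the top terms survive or $P\in I_A$'' only matches the relation ideal of Definition~\ref{toric_ideal} after rescaling the $f_i$ to be monic (harmless, since this changes neither $S$, nor $A$, nor the subduction hypothesis, but it should be said). With that WLOG made explicit, the argument is complete; the final descent step you flag as the main obstacle is indeed handled correctly, since cancellation forces $\mathrm{in}_{\succ}(P(\mathcal{F}))\prec m$ and each subduction step only introduces products $\mathbf{f}^{\gamma}$ whose leading monomials are bounded by the current one.
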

\noindent
In fact, it is sufficient to check the conditions of Theorem~\ref{sagbi_crit} for only those $S$-polynomials corresponding to any finite set generating the toric ideal of relations of leading terms of $\mathcal{F}.$
This can in turn be checked using the subduction algorithm.

The toric ideal of relations on leading terms is also the kernel of the map
\begin{align*}
    \KK[y_1,\dots,y_s] & \to \KK[\mathbf{x}]\\
    y_i & \mapsto \mathrm{in}_{\succ}(f_i(\mathbf{x})).
\end{align*}
Following the notation of \cite{sturmfels1996grobner}, we denote this ideal by~$I_{A}$, where~$A$ is the~$n \times s$ matrix whose columns are the vectors~$\alpha_i \in \NN^{n}$ occuring as leading exponents of the~$f_i$, i.e.,~$\mathrm{in}_{\succ}(f_i(\mathbf{x})) = \mathbf{x}^{\alpha_i}$.
Similarly, let~$I$ be the kernel of the map
\begin{equation}\label{param_map}
\begin{aligned}
    \KK[y_1,\dots,y_s] & \to \KK[\mathbf{x}]\\
    y_i & \mapsto f_i(\mathbf{x}),
\end{aligned}
\end{equation}
which defines a unirational variety parametrized by the $f_i$. Here is another strategy to verify a~SAGBI basis for a given term order.
\begin{theorem}\label{sagbi_crit2}
\cite[Theorem 11.4]{sturmfels1996grobner} Let~$\omega \in \RR^n$ be a weight vector that agrees with a term order~$\succ$ on a finite set $\mathcal{F}\subset \KK[\mathbf{x}]$. The set~$\mathcal{F}$ is a SAGBI basis of~$S = \KK[\mathcal{F}]$ with respect to $\succ$ if and only if~$\mathrm{in}_{A^T\omega}(I) = I_A$.
\end{theorem}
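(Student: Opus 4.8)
The plan is to translate the SAGBI condition into a comparison of two quotient presentations of algebras by the ideals $I$ and $I_A$, and then to recognize $\mathrm{in}_{A^T\omega}(I)$ as the defining ideal of an associated graded algebra interpolating between them. Write $\phi\colon \KK[\mathbf{y}]\to\KK[\mathbf{x}]$, $y_i\mapsto f_i$, so that $S=\KK[\mathbf{y}]/I$, and write $\psi\colon \KK[\mathbf{y}]\to\KK[\mathbf{x}]$, $y_i\mapsto \mathrm{in}_{\succ}(f_i)$, so that the monomial algebra $T\coloneqq \KK[\mathrm{in}_{\succ}(f_1),\dots,\mathrm{in}_{\succ}(f_s)]$ satisfies $T=\KK[\mathbf{y}]/I_A$. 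By definition $\mathcal{F}$ is a SAGBI basis precisely when $\mathrm{in}_{\succ}(S)=T$, and since $T\subseteq \mathrm{in}_{\succ}(S)$ always holds, the whole theorem reduces to detecting when this inclusion is an equality. Note that the weight $A^T\omega$ assigns to $y_i$ the value $\langle A^T\omega, e_i\rangle=\langle \omega,\alpha_i\rangle$, i.e. the $\omega$-weight of the monomial $\mathrm{in}_{\succ}(f_i)$.

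First I would prove the containment $\mathrm{in}_{A^T\omega}(I)\subseteq I_A$, which holds unconditionally. Take $P=\sum_u c_u\mathbf{y}^u\in I$ and let $D=\deg_{A^T\omega}(P)$ be its top weight, so $\mathrm{in}_{A^T\omega}(P)=\sum_{\langle A^T\omega,u\rangle=D}c_u\mathbf{y}^u$. The key computation is that for a monomial $\mathbf{y}^u$ one has $\mathrm{in}_{\omega}(\phi(\mathbf{y}^u))=\psi(\mathbf{y}^u)$, a scalar multiple of $\mathbf{x}^{Au}$ of $\omega$-weight $\langle\omega,Au\rangle=\langle A^T\omega,u\rangle$. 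Hence the component of $\phi(P)$ of top $\omega$-weight $D$ is exactly $\psi(\mathrm{in}_{A^T\omega}(P))$, and since $\phi(P)=0$ this component vanishes. Therefore $\mathrm{in}_{A^T\omega}(P)\in\ker\psi=I_A$, and taking such initial forms as $P$ ranges over $I$ gives the claimed inclusion of ideals.

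For the reverse inclusion I would argue through Hilbert functions of the $\omega$-filtration. The map $\phi$ is filtered for the $A^T\omega$-grading on $\KK[\mathbf{y}]$ and the $\omega$-degree filtration on $\KK[\mathbf{x}]$, and passing to associated graded algebras identifies $\mathrm{in}_{\omega}(S)\cong\KK[\mathbf{y}]/\mathrm{in}_{A^T\omega}(I)$; in particular $\dim_{\KK}\mathrm{in}_{\omega}(S)_{\le d}=\dim_{\KK}(\KK[\mathbf{y}]/\mathrm{in}_{A^T\omega}(I))_{\le d}$ for every $d$, which is the standard fact that a weight degeneration preserves the Hilbert function. Combined with the already-established $\mathrm{in}_{A^T\omega}(I)\subseteq I_A$ and the containment $T\subseteq\mathrm{in}_{\omega}(S)$, this shows $\mathrm{in}_{A^T\omega}(I)=I_A$ if and only if $T=\mathrm{in}_{\omega}(S)$, since an inclusion of ideals (resp. algebras) with equal graded dimensions in every degree must be an equality.

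Finally I would bridge between the $\omega$-initial algebra and the $\succ$-initial algebra, using that $\succ$ refines the preorder induced by $\omega$, so that $\mathrm{in}_{\succ}(g)=\mathrm{in}_{\succ}(\mathrm{in}_{\omega}(g))$ for all $g\in S$. If $T=\mathrm{in}_{\omega}(S)$ then every $\mathrm{in}_{\omega}(g)$ lies in the monomial algebra $T$, whence $\mathrm{in}_{\succ}(g)\in T$ and $\mathcal{F}$ is SAGBI; conversely, if $\mathcal{F}$ is SAGBI, a standard leading-term count gives $\dim_{\KK}S_{\le d}=\dim_{\KK}T_{\le d}$ in each $\omega$-degree, forcing $T=\mathrm{in}_{\omega}(S)$. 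Chaining these equivalences yields the theorem. I expect the main obstacle to be the associated-graded identification $\mathrm{in}_{\omega}(S)\cong\KK[\mathbf{y}]/\mathrm{in}_{A^T\omega}(I)$ together with the flatness/Hilbert-function bookkeeping, and the care needed to ensure $\omega$ and $\succ$ interact correctly (one wants $\omega$ to genuinely refine to $\succ$ on all of $S$, not merely to agree with it on $\mathcal{F}$); once these technical points are secured, the two containments and the dimension count assemble routinely.
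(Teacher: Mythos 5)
First, a point of order: the paper does not prove this statement at all --- it is quoted directly from Sturmfels' \emph{Gr\"obner Bases and Convex Polytopes} (Theorem 11.4) --- so there is no in-paper argument to compare against; your proposal has to stand on its own. Its first half does: the containment $\mathrm{in}_{A^T\omega}(I)\subseteq I_A$ is argued correctly, since the top $\omega$-weight component of $\phi(P)=0$ is exactly $\psi(\mathrm{in}_{A^T\omega}(P))$. The genuine gap is the asserted identification $\mathrm{in}_{\omega}(S)\cong\KK[\mathbf{y}]/\mathrm{in}_{A^T\omega}(I)$, which you present as the standard fact that weight degenerations preserve Hilbert functions. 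What is standard is that $\KK[\mathbf{y}]/\mathrm{in}_{A^T\omega}(I)$ is the associated graded of $S\cong\KK[\mathbf{y}]/I$ with respect to the filtration \emph{induced from} $\KK[\mathbf{y}]$, namely $F_{\le d}=\phi(\{P:\deg_{A^T\omega}P\le d\})$, whereas $\mathrm{in}_\omega(S)$ is the associated graded of the intrinsic filtration $G_{\le d}=\{g\in S:\deg_\omega g\le d\}$. One only has $F_{\le d}\subseteq G_{\le d}$: an element of low $\omega$-degree may admit only representations whose top weight cancels upon substitution. Concretely, take $\mathcal{F}=\{x+y,\,xy,\,xy^2\}$ and $\omega=(2,1)$. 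Then $I=\langle y_3^2-y_1y_2y_3+y_2^3\rangle$, $A^T\omega=(2,3,4)$, and $\mathrm{in}_{A^T\omega}(I)=\langle y_2^3-y_1y_2y_3\rangle$, so $\KK[\mathbf{y}]/\mathrm{in}_{A^T\omega}(I)$ is a finitely generated hypersurface ring, while $\mathrm{in}_\omega(S)\supseteq\KK[x,xy,xy^2,xy^3,\dots]$ is the classical non-finitely-generated initial algebra; already $\dim F_{\le 5}=6<7\le\dim G_{\le 5}$, because $xy^3=\phi(y_1y_3-y_2^2)$ has $\omega$-degree $5$ but no representation of $A^T\omega$-degree $\le 5$. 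So the equality of Hilbert functions on which your ``if and only if'' rests is false in general; it holds precisely in the situation the theorem is trying to characterize, which makes the step circular.

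The repair is asymmetric. The chain $\dim(\KK[\mathbf{y}]/I_A)_{\le d}\le\dim(\KK[\mathbf{y}]/\mathrm{in}_{A^T\omega}(I))_{\le d}=\dim F_{\le d}\le\dim G_{\le d}=\dim\mathrm{in}_\omega(S)_{\le d}$ does hold unconditionally, and it suffices for the direction ``SAGBI $\Rightarrow\mathrm{in}_{A^T\omega}(I)=I_A$'': the SAGBI hypothesis forces the two ends to agree, hence everything in between. For the converse you must actually prove $F_{\le d}=G_{\le d}$ under the hypothesis $\mathrm{in}_{A^T\omega}(I)=I_A$, e.g.\ by a descent: if $\phi(P)=g$ with $\deg_{A^T\omega}P>\deg_\omega g$, then $\mathrm{in}_{A^T\omega}(P)\in I_A=\mathrm{in}_{A^T\omega}(I)$, so one may subtract an element of $I$ with the same initial form and strictly lower the weight of the representation (termination requires $\omega$ rational and positive on the leading exponents). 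This rewriting step is essentially the subduction algorithm and is the real content of the theorem; your proposal omits it. The $\omega$-versus-$\succ$ issue you flag at the end is real but standard (replace $\succ$ by the refinement of $\omega$ by $\succ$) and is secondary to the gap above.
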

\begin{remark}
     Since~$A^T\omega$ is a special weight, the ideal~$\mathrm{in}_{A^T\omega}(I)$ need not be a monomial ideal and the \emph{initial form}~$\mathrm{in}_{A^T\omega}(f)$, i.e., the sum of all terms~$C\mathbf{x}^{\alpha}$, such that the inner product~$A^T\omega \cdot \alpha$ is maximal, need not be a monomial.
\end{remark}

\subsection{Newton polyhedra}
\begin{definition}
The \emph{Newton polytope~$\mathrm{New}(f)$ of~$f = \sum_{i=1}^d c_i \mathbf{x}^{\alpha_i}$} is a convex hull of the exponents of its monomials, i.e.,
$\mathrm{New}(f) = \mathrm{conv}\{ \alpha_1,\dots, \alpha_d\}$.
Its Minkowski sum with the negative orthant is called the 
 \emph{affine Newton polyhedron} of~$f$, which we denote by~$\mathrm{New_{aff}}(f):= \mathrm{New}(f) + \RR_{\leq 0}^n$.    
\end{definition}
\noindent
We now generalize this notion to the set of polynomials~$\mathcal{F} = \{ f_1,\dots,f_s\}$. The \emph{Newton polytope of~$\mathcal{F}$} is the Minkowski sum of the Newton polytopes $\mathrm{New}(f_i)$:
\[\mathrm{New}(\mathcal{F}) := \mathrm{New}(f_1) + \dots + \mathrm{New}(f_s).\]
Equivalently,~$\mathrm{New}(\mathcal{F}) = \mathrm{New}(f_1\cdots f_s)$. And similarly, we define the \emph{affine Newton polyhedron of~$\mathcal{F}$} to be the Minkowski sum
$\mathrm{New_{aff}}(\mathcal{F}):= \mathrm{New}(\mathcal{F}) + \RR_{\leq 0}^n$.

\begin{definition}
Consider a finite set of polynomials~$\mathcal{F} = \{ f_1,\dots,f_s\}$. We call two term orders~$\omega, \omega'$ equivalent with respect to~$\mathcal{F}$ if and only if~$\mathrm{in}_{\omega}(f_i) = \mathrm{in}_{\omega'}(f_i)$ for each~$i=1,\dots,s$. We write $\omega\sim_{\mathcal{F}} \omega'$.
\end{definition}

Equivalence classes with respect to a finite set of polynomials enjoy a nice relationship with the corresponding affine Newton polytope.

\begin{lem}\cite[Proposition 3.2.1]{GriStu93} \label{polyhedron_lem}
    The vertices of the
affine Newton polyhedron 
$$\mathrm{New_{aff}}(\mathcal{F}) = \mathrm{New}(\mathcal{F}) + \RR_{\leq 0}^n$$ are in one-to-one correspondence with the equivalence
classes of term orders with respect to~$\mathcal{F}$. More precisely, the open polyhedral cones of equivalent term orders are precisely the normal cones of $\mathrm{New_{aff}}(\mathcal{F}).$
\end{lem}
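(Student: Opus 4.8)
The plan is to recast the statement in the language of normal fans and then to account for the role of the negative orthant. First I would set up the dictionary between initial forms and faces of Newton polytopes. For a weight $\omega$ and a single $f_i$, the exponents of the monomials appearing in $\mathrm{in}_\omega(f_i)$ are precisely the lattice points of $\mathrm{New}(f_i)$ lying on the face $\mathrm{face}_\omega(\mathrm{New}(f_i))$ on which $\langle\cdot,\omega\rangle$ is maximized. Since the coefficients of $f_i$ are fixed, $\mathrm{in}_\omega(f_i)=\mathrm{in}_{\omega'}(f_i)$ holds exactly when $\omega$ and $\omega'$ select the same face of $\mathrm{New}(f_i)$. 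Consequently $\omega\sim_{\mathcal{F}}\omega'$ if and only if $\omega$ and $\omega'$ select the same face of $\mathrm{New}(f_i)$ for every $i$, which is precisely the condition that they lie in the same relatively open cone of the common refinement of the normal fans of the summands $\mathrm{New}(f_1),\dots,\mathrm{New}(f_s)$.

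Next I would invoke the Minkowski-sum identity $\mathrm{face}_\omega(\mathrm{New}(\mathcal{F}))=\sum_{i=1}^s \mathrm{face}_\omega(\mathrm{New}(f_i))$, a standard fact about support functions. This shows that the common refinement of the normal fans of the $\mathrm{New}(f_i)$ coincides with the normal fan of the Minkowski sum $\mathrm{New}(\mathcal{F})$. Combining with the previous paragraph, $\omega\sim_{\mathcal{F}}\omega'$ if and only if $\omega,\omega'$ lie in the same relatively open cone of the normal fan of $\mathrm{New}(\mathcal{F})$; the full-dimensional cones correspond bijectively to the vertices of $\mathrm{New}(\mathcal{F})$, equivalently to the tuples $(\alpha_1,\dots,\alpha_s)$ of selected vertices whose sum $\sum_i\alpha_i$ is the selected vertex of the sum.

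It then remains to pass from arbitrary weights to term orders, which is where the summand $\RR_{\leq 0}^n$ enters. By the weight-representation lemma quoted above, each term order $\succ$ is realized by some weight in $\RR_{\geq 0}^n$, and since a term order totally orders monomials it selects a single monomial from each $f_i$, hence lies in the interior of a full-dimensional normal cone meeting $\RR_{\geq 0}^n$. To identify which cones occur, I would use that the recession cone of $\mathrm{New_{aff}}(\mathcal{F})=\mathrm{New}(\mathcal{F})+\RR_{\leq 0}^n$ is $\RR_{\leq 0}^n$, whose polar is $\RR_{\geq 0}^n$; thus $\langle\cdot,\omega\rangle$ is bounded above on $\mathrm{New_{aff}}(\mathcal{F})$ exactly when $\omega\in\RR_{\geq 0}^n$. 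A vertex $v$ of $\mathrm{New}(\mathcal{F})$ therefore survives as a vertex of $\mathrm{New_{aff}}(\mathcal{F})$ precisely when its normal cone $N(v)$ meets $\RR_{\geq 0}^n$ in a full-dimensional cone, and in that case its normal cone in $\mathrm{New_{aff}}(\mathcal{F})$ equals $N(v)\cap\RR_{\geq 0}^n$. These are exactly the cones containing representatives of term orders, yielding the claimed one-to-one correspondence between vertices of $\mathrm{New_{aff}}(\mathcal{F})$ and equivalence classes of term orders, together with the identification of the open cones of equivalent term orders with the normal cones of $\mathrm{New_{aff}}(\mathcal{F})$.

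The engine of the argument is the Minkowski decomposition of faces, and the step I expect to be the main obstacle is the careful bookkeeping at the boundary of $\RR_{\geq 0}^n$: I must ensure that every term order, even one represented only by a weight on the boundary of the orthant, still falls into a full-dimensional normal cone of $\mathrm{New_{aff}}(\mathcal{F})$, and that the intersection with $\RR_{\geq 0}^n$ introduces no spurious vertices and loses no equivalence class. Everything else reduces to routine normal-fan manipulation.
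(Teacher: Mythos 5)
Your argument is correct, but note that the paper does not prove this lemma at all: it is quoted verbatim from Gritzmann--Sturmfels \cite[Proposition 3.2.1]{GriStu93}, so there is no in-paper proof to compare against. What you have written is the standard normal-fan argument that the cited source is based on: the identity $\mathrm{face}_\omega(\mathrm{New}(\mathcal{F}))=\sum_i\mathrm{face}_\omega(\mathrm{New}(f_i))$ identifies the equivalence classes of $\sim_{\mathcal{F}}$ with the relatively open cones of the normal fan of the Minkowski sum, and adding $\RR_{\leq 0}^n$ restricts that fan to the orthant $\RR_{\geq 0}^n$, which is exactly what is needed to pass from arbitrary weights to term orders. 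Two small points deserve care. First, $\mathrm{in}_\omega(f_i)=\mathrm{in}_{\omega'}(f_i)$ is equivalent to ``same face of $\mathrm{New}(f_i)$'' only because the vertices of the selected face lie in the support of $f_i$ (the face is recovered as the convex hull of the selected support points), which is worth one line. Second, the boundary issue you flag does resolve cleanly: a term order selects a single vertex of each $\mathrm{New}(f_i)$ (maximal elements under a multiplicative order are vertices), hence its representing weight $\omega\in\RR_{\geq 0}^n$ lies in the \emph{open} full-dimensional normal cone $N(v)^{\circ}$ of the selected vertex $v$ of $\mathrm{New}(\mathcal{F})$; since $N(v)^{\circ}$ is open and $\RR_{\geq 0}^n$ is the closure of $\RR_{>0}^n$, a nonempty intersection $N(v)^{\circ}\cap\RR_{\geq 0}^n$ forces $N(v)^{\circ}\cap\RR_{>0}^n\neq\emptyset$, which is precisely the condition for $v$ to survive as a vertex of $\mathrm{New_{aff}}(\mathcal{F})$ (one needs $\omega$ strictly positive so that the maximum over the recession cone $\RR_{\leq 0}^n$ is attained only at the origin). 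With those two observations made explicit, your proof is complete and is, as far as I can tell, the same route as the original reference.
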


\section{SAGBI and Gröbner bases detection}\label{sec3}
\subsection{Gr\"obner basis detection} Let us consider the following decision problem. Given a finite set of polynomials~$\mathcal{G} \subset \KK[\mathbf{x}]$, we want to decide whether $\mathcal{G}$ is a Gröbner basis for~$I \coloneqq \langle \mathcal{G} \rangle$ with respect to any of the term orders on $\KK [\mathbf{x}].$ 
Ideally, we would like an algorithm that somehow identifies \emph{all} terms orders such that $\mathcal{G}$ is a Gr\"{o}bner basis. Such an algorithm was originally described in~\cite{GriStu93}.
We produce pseudocode for their algorithm in Algorithm~\ref{alg:gb_detection} below.

\smallskip

\vspace{0.2cm}
\begin{algorithm}
\caption{Gröbner Basis Detection}
\BlankLine
\KwIn{A finite set~$\mathcal{G} \subset \KK[\mathbf{x}]$ of polynomials.}

\KwOut{Term orders~$\omega \in \RR^n$ for which $\mathcal{G}$ forms a Gröbner basis for~$I.$}

\BlankLine
Compute the Newton polytope $\mathrm{New}(\mathcal{G})$\;
\BlankLine
Initialize an empty \emph{list}\;
\BlankLine
\For{each vertex $v$ of $\mathrm{New}(\mathcal{G})$}{
    \If{the normal cone to $v$ intersects the positive orthant $\mathbb{R}_{\geq 0}$}{
        \For{any $\omega$ in this intersection}{
        
        Check if $\mathcal{G}$ is a Gröbner basis with respect to $\omega$ using Buchberger's $S$-pair criterion\;
        
        \If{$\mathcal{G}$ is a Gröbner basis}{
            Add $\omega$ to the \emph{list};}
        }
    }
}  
\BlankLine
\Return{list}\label{alg:gb_detection}
\end{algorithm}
\vspace{0.2cm}
\begin{theorem} \cite{GriStu93}
    The following algorithm~\ref{alg:gb_detection} is correct.
\end{theorem}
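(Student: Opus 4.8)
The plan is to show that the algorithm is both \emph{sound} (every returned weight $\omega$ makes $\mathcal{G}$ a Gröbner basis of $I=\langle\mathcal{G}\rangle$) and \emph{complete} (every term order for which $\mathcal{G}$ is a Gröbner basis is represented by some returned $\omega$), and that it terminates. The backbone of the argument is the observation that the Gröbner basis property is constant on equivalence classes of term orders with respect to $\mathcal{G}$, combined with the correspondence of \Cref{polyhedron_lem} between these classes and the vertices of $\mathrm{New_{aff}}(\mathcal{G})$.

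First I would establish an invariance lemma: if $\omega \sim_{\mathcal{G}} \omega'$, then $\mathcal{G}$ is a Gröbner basis with respect to $\omega$ if and only if it is with respect to $\omega'$. The point is that Buchberger's $S$-pair criterion---the test run in the inner loop---is determined entirely by the leading monomials $\mathrm{in}_{\omega}(g)$, $g \in \mathcal{G}$: the least common multiples and cofactors defining each $S$-polynomial, and the divisibility pattern governing its reduction against $\mathcal{G}$, depend only on these leading terms, not on the particular weight inducing them. By the definition of $\sim_{\mathcal{G}}$ the leading monomials agree for $\omega$ and $\omega'$, so the criterion returns the same verdict. Hence the property depends only on the equivalence class.

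Next I would use the weight-representation lemma for ideals together with \Cref{polyhedron_lem} to argue that the loop ranges over exactly the right set of classes. By \cite[Proposition 1.11]{sturmfels1996grobner} it suffices to search among weights in the positive orthant $\RR^n_{\geq 0}$, since the initial ideal of $I$ induced by any term order is realized by such a weight. By \Cref{polyhedron_lem}, the equivalence classes of genuine term orders with respect to $\mathcal{G}$ are in bijection with the vertices of $\mathrm{New_{aff}}(\mathcal{G})$, their open cones being the normal cones; here one uses $\mathrm{New}(\mathcal{G}) = \mathrm{New}(f_1\cdots f_s)$, so that a single-monomial leading term for each $f_i$ corresponds precisely to a vertex of the Minkowski sum. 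The bridge to the polytope actually computed, $\mathrm{New}(\mathcal{G})$, is that a vertex $v$ of $\mathrm{New}(\mathcal{G})$ remains a vertex of $\mathrm{New_{aff}}(\mathcal{G})$ exactly when its normal cone meets $\RR^n_{\geq 0}$, since adjoining $\RR^n_{\leq 0}$ truncates each normal cone to its intersection with the positive orthant and discards those vertices whose cone misses it. This is precisely the guard in the algorithm's conditional.

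Putting these together finishes the argument: the loop visits one representative $\omega$ from each equivalence class of term orders meeting $\RR^n_{\geq 0}$, the Buchberger check on that representative correctly decides the entire class by the invariance lemma, and $\omega$ is recorded exactly when the class yields a Gröbner basis; finiteness of the vertex set of $\mathrm{New}(\mathcal{G})$ gives termination. I expect the main obstacle to be stating the invariance lemma cleanly---in particular making precise that whether an $S$-polynomial reduces to zero is a function of the leading-term data alone---and carefully justifying the identification of the positive-orthant-meeting normal cones of $\mathrm{New}(\mathcal{G})$ with the normal cones of $\mathrm{New_{aff}}(\mathcal{G})$, since the algorithm is phrased in terms of the former while \Cref{polyhedron_lem} is phrased in terms of the latter.
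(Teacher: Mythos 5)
Your overall architecture---soundness plus completeness via \Cref{polyhedron_lem}, together with an invariance lemma asserting that the Gr\"obner property is constant on $\sim_{\mathcal{G}}$-equivalence classes---is exactly the skeleton the paper uses for the SAGBI analogue (Theorem~\ref{cor2} via Corollary~\ref{cor_cones}); for the Gr\"obner statement itself the paper gives no proof and defers to Gritzmann--Sturmfels. The polyhedral half of your argument, identifying the vertices of $\mathrm{New}(\mathcal{G})$ whose normal cones meet $\RR^n_{\geq 0}$ with the vertices of $\mathrm{New_{aff}}(\mathcal{G})$, is correct and is precisely the content of \Cref{polyhedron_lem} combined with the weight-representation lemma.

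The genuine gap is in your justification of the invariance lemma. It is not true that Buchberger's test ``is determined entirely by the leading monomials $\mathrm{in}_{\omega}(g)$'': the $S$-polynomials are, but their reduction is not. At each step of the division algorithm one selects the $\omega$-leading term of the current partial remainder and tests it for divisibility by the $\mathrm{in}_{\omega}(g_i)$; two weights that agree on the leading monomials of $\mathcal{G}$ need not agree on the leading monomial of an intermediate remainder, so the ``divisibility pattern governing the reduction'' is not a function of the leading-term data of $\mathcal{G}$ alone. You flag this yourself as the main obstacle, but it is the entire content of the lemma, not a bookkeeping issue. The standard repair argues at the level of initial ideals rather than reduction paths: if $\mathcal{G}$ is a Gr\"obner basis with respect to $\omega$ and $\omega\sim_{\mathcal{G}}\omega'$, then
\[
\mathrm{in}_{\omega'}(I)\;\supseteq\;\langle \mathrm{in}_{\omega'}(g)\mid g\in\mathcal{G}\rangle \;=\;\langle \mathrm{in}_{\omega}(g)\mid g\in\mathcal{G}\rangle\;=\;\mathrm{in}_{\omega}(I),
\]
and since the standard monomials of any monomial initial ideal of $I$ form a $\KK$-basis of $\KK[\mathbf{x}]/I$ (Macaulay's theorem), one monomial initial ideal of $I$ cannot properly contain another---a proper subset of a basis cannot again be a basis. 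Hence $\mathrm{in}_{\omega'}(I)=\langle\mathrm{in}_{\omega'}(g)\mid g\in\mathcal{G}\rangle$ and $\mathcal{G}$ is a Gr\"obner basis with respect to $\omega'$. This is the exact counterpart of the nontrivial reduced-basis argument the paper must deploy to establish Corollary~\ref{cor_cones} in the SAGBI setting; in neither setting does the invariance come for free from the syntax of the criterion.
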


\subsection{SAGBI basis detection} Similarly, the problem of SAGBI detection is another decision problem. Given a finite set $\mathcal{F} \subset \KK[\mathbf{x}]$ of polynomials, we want to decide whether there exists a term order such that $\mathcal{F}$ is a SAGBI basis of~$S:=\KK[\mathcal{F}]$. The objective is to have an algorithm capable of identifying term orders, preferably all of them, that meet the Subduction criterion.



\vspace{0.2cm}

In Algorithm \ref{alg:SAGBI_detection} presented below we outline the solution to this problem that's similar to what we've seen in Algorithm \ref{alg:gb_detection}.
\begin{example}\label{ex::1}
    Consider a subalgebra $S = \KK[\mathcal{F}]=\{ x^2 + y^2, xy, y^2\}$. The Newton polytope of $\mathcal{F}$ is a segment with endpoints $(3, 3)$ and $(1,5)$. We illustrate the affine Newton polyhedron  $\mathrm{New_{aff}}(\mathcal{F})$ with shaded normal cones to its vertices. 
    
    \bigskip
    
\begin{minipage}{0.4\textwidth}
\begin{center}
\begin{tikzpicture}[scale = 0.8]
  \coordinate (A) at (3,3);
  \coordinate (B) at (1,5);
  \draw[thick] (A) -- (B);
  \fill (A) circle (2pt);
  \fill (B) circle (2pt);
  \node[below] at (A) {\small{$(3,3)$}};
  \node[left] at (B) {\small{$(1,5)$}};
  \draw[thick] (B) -- ++(0,2.5) coordinate (D);
  \draw[thick] (A) -- ++(2.5,0) coordinate (P);
  \draw[thick] (B) -- ++(2.5,2.5) coordinate (C);
  \draw[thick] (A) -- ++(2.5,2.5) coordinate (T);
  \fill[red!30] (B) -- (C) -- (D) -- cycle;
  \fill[green!30] (A) -- (P) -- (T) -- cycle;
  \node[right] at (4.5,4) {$C$};
\end{tikzpicture}
\end{center}
\end{minipage}
\begin{minipage}{0.57\textwidth}
The set $\mathcal{F}$ doesn't form a SAGBI with respect to any weight $\omega$ of the red cone, but it does form a~SAGBI with respect to any weight~$\omega$ of the green cone~$C = \{\omega \mid \omega_1 > \omega_2 \geq 0\}$. The output of the Algorithm \ref{alg:SAGBI_detection} for $\mathcal{F}$ will be some~$\omega \in C$.
\end{minipage}
\end{example}

\begin{algorithm}
\caption{SAGBI Basis Detection}
\SetAlgoLined
\BlankLine
\KwIn{A finite set~$ \mathcal{F} \subset \KK[\mathbf{x}]$ of polynomials.}
\KwOut{Term orders~$\omega \in \RR^n$ for which $\mathcal{F}$ forms a SAGBI basis for~$S.$}
\BlankLine
Compute the Newton polytope~$\mathrm{New}(\mathcal{F})$\;
\BlankLine
Initialize an empty \emph{list}\;
\BlankLine
\For{each vertex~$v$ of~$\mathrm{New}(\mathcal{F})$}{
\If{the normal cone to~$v$ intersects the positive orthant~$\RR_{\geq 0}$}{
    
    \For{any $\omega$ in this intersection}{
    
    Check if~$\mathcal{F}$ is a SAGBI basis with respect to~$\omega$ using Theorems~\ref{sagbi_crit} or \ref{sagbi_crit2}\;
    
    \If{$\mathcal{F}$ is a SAGBI basis}{Add~$\omega$ to the \emph{list}\;}
    }
}
}  
\Return{list}\label{alg:SAGBI_detection}
\end{algorithm}
\begin{theorem}\label{cor2}
    The following algorithm \ref{alg:SAGBI_detection} is correct.
\end{theorem}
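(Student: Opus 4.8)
The plan is to verify three things for \Cref{alg:SAGBI_detection}: termination, soundness (every returned weight really yields a SAGBI basis), and completeness (if $\mathcal{F}$ is a SAGBI basis for \emph{some} term order, a representative of its class is returned). Termination holds because $\mathrm{New}(\mathcal{F})$ has finitely many vertices and each inner step is finite: the toric ideal $I_A$ of \Cref{toric_ideal} has a finite generating set, and subducting each of its generators terminates since a term order is a well-order, so the leading monomials appearing during subduction strictly decrease; alternatively the test of \Cref{sagbi_crit2} reduces to comparing $\mathrm{in}_{A^T\omega}(I)$ with $I_A$, a finite Gr\"obner-basis computation. Soundness is immediate, because a weight is appended only after passing a criterion (\Cref{sagbi_crit} or \Cref{sagbi_crit2}) that is necessary and sufficient for $\mathcal{F}$ to be a SAGBI basis with respect to the term order represented by $\omega$.

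The heart of the matter, and the step I expect to be the main obstacle, is the claim that the SAGBI-basis property is constant on each equivalence class $\sim_{\mathcal{F}}$: if $\succ$ and $\succ'$ are term orders with $\mathrm{in}_{\succ}(f_i) = \mathrm{in}_{\succ'}(f_i)$ for all $i$, then $\mathcal{F}$ is a SAGBI basis for $\succ$ if and only if it is one for $\succ'$. I would prove this by a Hilbert-function comparison rather than by subduction, since the naive approach fails: an S-polynomial $P(\mathcal{F})$ with $P \in I_A$ is a fixed polynomial whose own leading term need not be controlled by the cell $\sim_{\mathcal{F}}$, so its subduction may genuinely differ for $\succ$ and $\succ'$. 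Instead I would use that passing to an initial algebra preserves dimensions degree by degree: in the homogeneous case, multiplicativity $\mathrm{in}_{\succ}(g)\,\mathrm{in}_{\succ}(h) = \mathrm{in}_{\succ}(gh)$ shows that the degree-$d$ part of $\mathrm{in}_{\succ}(S)$ is spanned by the leading monomials of a basis of $S_d$, whence $\dim_{\KK}\bigl(\mathrm{in}_{\succ}(S)\bigr)_d = \dim_{\KK} S_d$, a quantity independent of $\succ$. Writing $M := \KK[\mathrm{in}_{\succ}(f_1),\dots,\mathrm{in}_{\succ}(f_s)] = \KK[\mathrm{in}_{\succ'}(f_1),\dots,\mathrm{in}_{\succ'}(f_s)]$, equal because the two orders agree on $\mathcal{F}$, the SAGBI hypothesis for $\succ$ gives $M = \mathrm{in}_{\succ}(S)$, so $\dim_{\KK} M_d = \dim_{\KK} S_d = \dim_{\KK}\bigl(\mathrm{in}_{\succ'}(S)\bigr)_d$ in every degree. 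Since $M \subseteq \mathrm{in}_{\succ'}(S)$ always, equality of dimensions forces $M = \mathrm{in}_{\succ'}(S)$, i.e.\ $\mathcal{F}$ is a SAGBI basis for $\succ'$. For non-homogeneous $\mathcal{F}$ one runs the same argument with the $\omega$-filtration and its associated graded, or after homogenizing.

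It remains to match the loop of the algorithm to these classes. By \Cref{polyhedron_lem} the equivalence classes of term orders with respect to $\mathcal{F}$ are exactly the normal cones of $\mathrm{New_{aff}}(\mathcal{F}) = \mathrm{New}(\mathcal{F}) + \RR_{\leq 0}^n$. Because the recession summand $\RR_{\leq 0}^n$ forces every bounded-above functional to lie in $\RR^n_{\geq 0}$, the vertices of $\mathrm{New_{aff}}(\mathcal{F})$ are precisely those vertices $v$ of $\mathrm{New}(\mathcal{F})$ whose normal cone meets $\RR^n_{\geq 0}$, and that intersection is the corresponding cell; this is exactly the admissibility test in the algorithm, and a generic $\omega$ in the intersection represents a term order of the cell. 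Every term order is represented by some $\omega^{\ast} \in \RR^n_{\geq 0}$ by the weight lemma for ideals, and $\omega^{\ast}$ lies in one such cell. Hence, if $\mathcal{F}$ is a SAGBI basis for some term order, the algorithm reaches the corresponding vertex, tests a representative $\omega$ of its cell, and by the constancy lemma finds $\mathcal{F}$ to be a SAGBI basis and appends $\omega$. Combined with soundness, the returned list is a complete and correct set of representatives of the equivalence classes of term orders for which $\mathcal{F}$ is a SAGBI basis, which proves \Cref{cor2}.
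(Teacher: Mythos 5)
Your overall architecture (soundness, termination, and the reduction of completeness to the claim that the SAGBI property is constant on $\sim_{\mathcal{F}}$-classes, which are matched to normal cones of $\mathrm{New_{aff}}(\mathcal{F})$ via \Cref{polyhedron_lem}) coincides with the paper's plan. Where you genuinely diverge is in the proof of the invariance step, i.e.\ the paper's \Cref{cor_cones}. The paper proves it by passing from $\mathcal{F}$ to a \emph{reduced} SAGBI basis $H$: for reduced $H$ the equivalence class $C[\omega]$ with respect to the subalgebra is cut out exactly by the inequalities $\mathrm{in}_{\omega}(h)=\mathrm{in}_{\omega'}(h)$ for $h\in H$, and the two reduction operations only weaken the inequalities defining the $\sim_{\mathcal{F}}$-cell, so $\omega\sim_{\mathcal{F}}\omega'$ implies $\omega\sim_{S}\omega'$. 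You instead argue by Hilbert functions: $\dim_{\KK}\bigl(\mathrm{in}_{\succ}(S)\bigr)_d=\dim_{\KK}S_d$ is independent of $\succ$, and since $\KK[\mathrm{in}_{\succ}(\mathcal{F})]=\KK[\mathrm{in}_{\succ'}(\mathcal{F})]\subseteq \mathrm{in}_{\succ'}(S)$ always holds, equality of graded dimensions forces equality of algebras. This is a clean and correct argument in the homogeneous case; it is essentially the mechanism the paper itself deploys later, in \Cref{HilbFunc}, for a different purpose. Your explicit observation that the naive route (re-running subduction of the S-polynomials for $\succ'$) does not obviously transfer is apt, and is precisely what the reduced-basis argument circumvents.

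The one place your write-up falls short of the paper is the non-homogeneous case, which the algorithm must handle since its input is an arbitrary finite $\mathcal{F}\subset\KK[\mathbf{x}]$. You dismiss it with ``run the same argument with the $\omega$-filtration, or after homogenizing,'' but neither reduction is routine: the $\omega$-filtration and the $\omega'$-filtration on $S$ are different filtrations, so there is no single grading in which to compare $\dim\bigl(\mathrm{in}_{\omega}(S)\bigr)$ and $\dim\bigl(\mathrm{in}_{\omega'}(S)\bigr)$ degree by degree, and the homogenization of $\KK[\mathcal{F}]$ is not in general generated by the homogenizations of the $f_i$. The paper's reduced-SAGBI-basis argument is insensitive to homogeneity and covers the general case uniformly; to make your proof complete you would either need to supply the filtration bookkeeping carefully or fall back on an argument of that kind for non-homogeneous input.
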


Following the usual definitions used for Gr\"{o}bner bases, we establish the following assertions:
\begin{definition}
   A finite SAGBI basis $H = \{ h_1, \ldots , h_k \}$ with respect to a term order $\succ $ is called \emph{reduced} if each $h_i$ is such that its leading coefficient is $1$ and none of its terms are monomials in any $\mathrm{in}_{\succ} (h_j)$.
\end{definition}
\begin{prop}\cite[Section 6.6]{RobbianoCC2}
Whenever a finite SAGBI basis exists, a~unique reduced SAGBI basis for the same term order also exists.
\end{prop}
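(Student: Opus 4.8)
The plan is to mirror the classical existence-and-uniqueness argument for reduced Gröbner bases, with the initial ideal replaced by the initial algebra and polynomial division replaced by subduction. Assume a finite SAGBI basis exists for $\succ$, so that $\mathrm{in}_{\succ}(S)$ is a finitely generated monomial $\KK$-subalgebra. The monomials contained in $\mathrm{in}_{\succ}(S)$ form a finitely generated submonoid $M$ of $(\NN^n,+)$ under multiplication, and the first key input is the standard fact that such an affine monoid has a \emph{unique} minimal generating set, namely its set of irreducible elements (those monomials in $M$ that are not a nontrivial product of two elements of $M$). I denote these generators $m_1,\dots,m_k$; they are intrinsic to $S$ and $\succ$, and by minimality $m_i\notin\KK[m_j:j\ne i]$ for each $i$.

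\textbf{Existence.} For each $i$ I choose a monic $g_i\in S$ with $\mathrm{in}_{\succ}(g_i)=m_i$, which is possible since $m_i\in\mathrm{in}_{\succ}(S)$. I then tail-reduce each $g_i$: whenever a non-leading term $c\,\mathbf{x}^{\beta}$ of $g_i$ satisfies $\mathbf{x}^{\beta}\in\mathrm{in}_{\succ}(S)$, I write $\mathbf{x}^{\beta}=m_{j_1}\cdots m_{j_t}$, so that $p:=g_{j_1}\cdots g_{j_t}\in S$ is monic with $\mathrm{in}_{\succ}(p)=\mathbf{x}^{\beta}$, and replace $g_i$ by $g_i-c\,p\in S$. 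Choosing $\mathbf{x}^{\beta}$ to be the $\succ$-largest such offending term, this subtraction cancels $\mathbf{x}^{\beta}$ and introduces only terms $\prec\mathbf{x}^{\beta}$, while leaving the leading term $m_i$ and all terms $\succ\mathbf{x}^{\beta}$ untouched. Hence the $\succ$-largest offending term strictly decreases at each step; since $\succ$ is a well-order, the process terminates after finitely many steps (this is exactly the subduction algorithm applied to the tail). The resulting $h_i$ is monic with $\mathrm{in}_{\succ}(h_i)=m_i$ and no tail term in $\mathrm{in}_{\succ}(S)$, so $H=\{h_1,\dots,h_k\}$ is a reduced SAGBI basis.

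\textbf{Uniqueness.} Let $H$ and $H'$ be two reduced SAGBI bases for $\succ$. Their leading monomials both minimally generate $\mathrm{in}_{\succ}(S)$, so by uniqueness of the minimal generating set of $M$ they coincide as sets; after reindexing, $\mathrm{in}_{\succ}(h_i)=\mathrm{in}_{\succ}(h_i')=m_i$. Set $d_i:=h_i-h_i'\in S$. Since both are monic with the same leading monomial, the leading terms cancel, so either $d_i=0$ or $\mathrm{in}_{\succ}(d_i)=\mathbf{x}^{\gamma}$ with $\mathbf{x}^{\gamma}\prec m_i$. In the latter case $\mathbf{x}^{\gamma}\in\mathrm{in}_{\succ}(S)$ (as $d_i$ is a nonzero element of $S$), and $\mathbf{x}^{\gamma}$ occurs with nonzero coefficient in $d_i$, hence as a tail term of $h_i$ or of $h_i'$, contradicting reducedness. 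Therefore $d_i=0$ and $h_i=h_i'$, giving uniqueness.

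\textbf{Main obstacle.} The conceptual crux is the uniqueness of the minimal monomial generating set of the initial algebra; this is where finite generation of $\mathrm{in}_{\succ}(S)$ is genuinely used, and where I would invoke the theory of affine monoids (Hilbert bases). The only other delicate point is the termination of tail-reduction, which I expect to settle cleanly via the well-ordering property of $\succ$ rather than through a Dickson's-lemma or Noetherianity argument.
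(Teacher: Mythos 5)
Your proof is correct. Note that the paper does not actually prove this proposition: it is stated with a citation to the literature, so there is no in-paper argument to compare against; what you have written is a complete, self-contained proof along the standard lines, i.e.\ the exact analogue of existence and uniqueness of reduced Gr\"obner bases, with the initial ideal replaced by the initial algebra and division by subduction. The two load-bearing ingredients are exactly the ones you flag: (i) a finitely generated submonoid of $(\NN^n,+)$ is positive, hence has a unique minimal generating set consisting of its irreducible elements, which forces the leading monomials of any reduced SAGBI basis to be this canonical set; and (ii) termination of tail subduction, which follows because the $\succ$-largest tail term lying in $\mathrm{in}_{\succ}(S)$ strictly decreases at each step and $\succ$ is a well-order. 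One small point you use implicitly in two places and could make explicit: a monomial belonging to a monomial algebra $\KK[\mathbf{x}^{\alpha_1},\dots,\mathbf{x}^{\alpha_s}]$ is necessarily a power product of the $\mathbf{x}^{\alpha_i}$, by linear independence of distinct monomials. This is what guarantees both that each irreducible $m_i$ is realized as $\mathrm{in}_{\succ}(g)$ for some $g\in S$ (via multiplicativity of initial terms), and that ``tail term not a power product of the $m_j$'' is equivalent to ``tail term not in $\mathrm{in}_{\succ}(S)$'', which is exactly what your uniqueness step needs. With that observation spelled out, the argument is airtight and matches the notion of reducedness the paper uses elsewhere (leading coefficients $1$, no leading monomial a product of the others, no tail term in the initial algebra).
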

To prove the above algorithm is correct, we will use the following results.
\begin{lem}
Consider an equivalence class of weight vectors $C[\omega ]$ with respect to a finitely generated subalgebra $S\subset \KK [\mathbf{x}]$, with the property that $S$ admits a finite SAGBI basis for some $\omega \in C[\omega ].$ The set $C[w]$ is then a relatively open convex polyhedral cone in $\mathbb{R}^n.$
\end{lem}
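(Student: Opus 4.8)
The plan is to identify $C[\omega]$ with the equivalence class of a concrete finite set and then invoke Lemma~\ref{polyhedron_lem}. Since $S$ admits a finite SAGBI basis for some $\omega \in C[\omega]$, the existence of reduced SAGBI bases (the preceding Proposition) yields a reduced finite SAGBI basis $H = \{h_1, \dots, h_k\}$ for the order represented by $\omega$. Write $\mathbf{x}^{\beta_j} := \mathrm{in}_{\omega}(h_j)$, so that $M := \mathrm{in}_{\omega}(S) = \KK[\mathbf{x}^{\beta_1}, \dots, \mathbf{x}^{\beta_k}]$ is a monomial subalgebra. Let $C_H := \{\omega' : \mathrm{in}_{\omega'}(h_j) = \mathbf{x}^{\beta_j}\text{ for all }j\}$ denote the equivalence class of $\omega$ with respect to the finite set $H$. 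By Lemma~\ref{polyhedron_lem} applied to $H$, the class $C_H$ is the normal cone of $\mathrm{New_{aff}}(H)$ at the vertex $\beta_1 + \dots + \beta_k$, hence a relatively open convex polyhedral cone. The entire statement therefore reduces to proving the equality $C[\omega] = C_H$.

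First I would establish $C[\omega] \subseteq C_H$, where reducedness is decisive. Fix $\omega' \in C[\omega]$, so that $\mathrm{in}_{\omega'}(S) = M$. Each $h_j$ lies in $S$, so $\mathrm{in}_{\omega'}(h_j) \in M$; since $M$ is a monomial subalgebra, every monomial occurring in $\mathrm{in}_{\omega'}(h_j)$ must already lie in $M$. Because $H$ is reduced, no non-leading term of $h_j$ lies in $M$, so the only monomial of $h_j$ that can appear in $\mathrm{in}_{\omega'}(h_j)$ is the leading one $\mathbf{x}^{\beta_j}$. Hence $\mathrm{in}_{\omega'}(h_j) = \mathbf{x}^{\beta_j}$ for every $j$, i.e. $\omega' \in C_H$.

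For the reverse inclusion $C_H \subseteq C[\omega]$ I would fix $\omega' \in C_H$. Since the leading forms of all generators agree for $\omega$ and $\omega'$, and the weight initial form is multiplicative, every product $\prod_j h_j^{a_j}$ has the same initial form $\mathbf{x}^{\sum_j a_j \beta_j}$ under both weights; in particular $M \subseteq \mathrm{in}_{\omega'}(S)$ is automatic. What remains is the opposite containment $\mathrm{in}_{\omega'}(S) \subseteq M$, that is, the assertion that $H$ is again a SAGBI basis with respect to $\omega'$. By Theorem~\ref{sagbi_crit} the only S-polynomials to test come from a finite generating set of the toric ideal $I_B$ of relations among the $\mathbf{x}^{\beta_j}$; as this ideal depends only on the leading monomials, it is identical for $\omega$ and $\omega'$, and the same finitely many S-polynomials $P_\ell(H)$ are at stake. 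Equivalently, writing $B$ for the matrix of the $\beta_j$ and $J$ for the ideal of relations among the $h_j$, Theorem~\ref{sagbi_crit2} reduces the claim to $\mathrm{in}_{B^T \omega'}(J) = I_B$, whereas we already know $\mathrm{in}_{B^T\omega}(J) = I_B$.

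The main obstacle is precisely this reverse containment. Passing from $\omega$ to $\omega'$ alters the intermediate leading terms encountered during subduction, so the fact that each $P_\ell(H)$ subducts to $0$ for $\omega$ does not transfer formally to $\omega'$. I expect to resolve it by showing that $B^T$ carries the cone $C_H$ into the single relatively open Gröbner cone $\{v : \mathrm{in}_v(J) = I_B\}$ of $J$: the binomials generating $I_B$ are homogeneous for the grading $\deg y_j = \beta_j$, hence impose no linear condition separating $B^T\omega$ from $B^T\omega'$, and a flatness (Hilbert function) comparison between the initial ideals $\mathrm{in}_{B^T\omega}(J) = I_B$ and $\mathrm{in}_{B^T\omega'}(J)$ then forces their equality throughout $C_H$. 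This step is cleanest when the generators are homogeneous, which is exactly the setting of the complexity analysis that follows. Combining the two inclusions yields $C[\omega] = C_H$, and with it the claim.
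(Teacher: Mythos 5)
Your first inclusion $C[\omega]\subseteq C_H$ is exactly the paper's argument: pass to a reduced finite SAGBI basis $H$ and use reducedness to force $\mathrm{in}_{\omega'}(h_j)=\mathrm{in}_{\omega}(h_j)$. (Your justification is in fact slightly more careful than the paper's: you note that \emph{every} monomial of the initial form $\mathrm{in}_{\omega'}(h_j)$ must lie in the monomial algebra $M$, whereas the paper tacitly treats $\mathrm{in}_{\omega'}(h_j)$ as a single monomial.) Where you genuinely depart from the paper is in recognizing that the lemma also needs the reverse inclusion $C_H\subseteq C[\omega]$ --- a proper subset of a relatively open cone need not be one --- and that this does not follow formally from subduction at $\omega$. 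The paper simply writes the equality of the two sets; the missing content is precisely Corollary~\ref{cor_cones} applied to $H$, and the paper's justification of that corollary in turn points back to this proof. So you have correctly isolated the one nontrivial step, which the paper elides.

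Your plan for closing that step aims at the harder of the two containments. Instead of trying to produce $I_B\subseteq\mathrm{in}_{B^T\omega'}(J)$, use the containment that holds automatically for \emph{every} weight $w$: if $p(h_1,\dots,h_k)=0$ and one extracts the terms of maximal $w$-weight from this identity, one finds that $\mathrm{in}_{B_w^Tw}(p)$ vanishes on the leading monomials $\mathrm{in}_w(h_1),\dots,\mathrm{in}_w(h_k)$, so $\mathrm{in}_{B_w^Tw}(J)\subseteq I_{B_w}$. Since $\omega'\in C_H$ gives $B_{\omega'}=B$, this yields $\mathrm{in}_{B^T\omega'}(J)\subseteq I_B=\mathrm{in}_{B^T\omega}(J)$, and in the graded setting the equality of Hilbert functions of initial ideals of $J$ forces equality, whence Theorem~\ref{sagbi_crit2} applies at $\omega'$ and $\mathrm{in}_{\omega'}(S)=M$. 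By contrast, the observation that the binomials of $I_B$ are homogeneous for $\deg y_j=\beta_j$ says nothing about $\mathrm{in}_{B^T\omega'}(J)$ and does not by itself rule out the two weights landing in different Gr\"obner cones of $J$. Finally, note that the lemma is stated for an arbitrary finitely generated subalgebra, so the Hilbert-function step requires the graded hypothesis (or a homogenization/filtration argument) that neither you nor the paper supplies; your proof is no less complete than the paper's on this point, but the restriction should be made explicit.
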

\begin{proof}
 Assume~$\omega \sim_{S} \omega'$, i.e., the two finitely generated monomial algebras~$\mathrm{in}_{\omega}(S)$ and~$\mathrm{in}_{\omega'}(S)$ are the same. Consider a reduced SAGBI basis~$H:=\{h_1,\ldots,h_k\}$ for~$\mathrm{in}_{\omega}(S)=\mathrm{in}_{\omega'}(S)$ w.r.t.~$\omega$. For any polynomial~$h\in H$ its leading monomial~$\mathrm{in}_{\omega'}(h)$ lies in the algebra~$\mathrm{in}_{\omega}(S)$ and thus can be represented as a monomial in~$\mathrm{in}_{\omega}(h_1),\ldots,\mathrm{in}_{\omega}(h_k)$. If~$\mathrm{in}_{\omega'}(h)$ is a monomial of~$h$ different from~$\mathrm{in}_{\omega}(h)$, the SAGBI basis~$H$ is not reduced and we have a contradiction.
Therefore,~$\mathrm{in}_{\omega}(h) = \mathrm{in}_{\omega'}(h)$ for all~$h\in H$ and each equivalence class of weight vectors is the relatively open convex polyhedral cone
\begin{equation}\label{cone}
    \begin{aligned}
        C[\omega] = & \{\omega' \in \RR^n \colon \mathrm{in}_{\omega}(S) = \mathrm{in}_{\omega'}(S) \}\\
        = & \{\omega' \in \RR^n \colon \mathrm{in}_{\omega}(h) = \mathrm{in}_{\omega'}(h) \text{ for all } h \in H\},
    \end{aligned}
\end{equation}   
cut out by strict linear inequalities in $\omega '$ enforcing that $\mathrm{in}_{\omega '}(h)$ is greater than $\mathrm{in}_{\omega '} (t)$ all other terms $t$ of $h$ for all $h\in H.$
\end{proof}
We observe that if $\mathcal{F}$ is a SAGBI basis with respect to $\omega$, we have the inclusion,
\[      \{\omega' \in \RR^n \colon \mathrm{in}_{\omega}(f) = \mathrm{in}_{\omega'}(f) \text{ for all } f\in \mathcal{F}\} \subseteq C[\omega]\]
and the preceding proof shows that equality holds when $\mathcal{F}$ is reduced.
In general, to obtain a reduced SAGBI basis we may repeatedly apply the following two operations to $\mathcal{F}$: (1) remove all polynomials $f\in \mathcal{F}$ such that either the leading term $\mathrm{in} (f)$ is products of leading terms of elements from $\mathcal{F}\setminus{\{f\}}$, and (2) update any $f$ whose tail $f - \mathrm{in} (f)$ contains a term that is a monomial in $\mathrm{in}_{\omega}(f)$ for $f \in \mathcal{F}$, we subduce this tail by $\mathcal{F}$. Note that the second operation does not change the set of leading terms, and the first operation removes one condition $\mathrm{in}_{\omega}(f) = \mathrm{in}_{\omega'}(f)$, thus weakening the defining inequalities for the cone of equivalence classes $\sim_{\mathcal{F}}$. Thus, we have the following corollary.
\begin{cor}\label{cor_cones}
    Let $\mathcal{F}$ be a finite set of polynomials and $\omega \sim_{\mathcal{F}} \omega'$.
If $\mathcal{F}$ is a SAGBI basis of $\KK[\mathcal{F}]$ with respect to $\omega$
 then $\mathcal{F}$ is a SAGBI basis of $\KK[\mathcal{F}]$
 with respect to~$\omega'$. 
\end{cor}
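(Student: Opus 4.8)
The plan is to reduce the statement to the single claim that $\omega'$ lies in the equivalence class $C[\omega]$ of the \emph{subalgebra} $S = \KK[\mathcal{F}]$, i.e. that $\mathrm{in}_{\omega'}(S) = \mathrm{in}_{\omega}(S)$. Once this is known, the SAGBI hypothesis at $\omega$ gives $\mathrm{in}_{\omega}(S) = \KK[\mathrm{in}_{\omega}(f_1),\dots,\mathrm{in}_{\omega}(f_s)]$, and since $\omega\sim_{\mathcal{F}}\omega'$ forces $\mathrm{in}_{\omega'}(f_i) = \mathrm{in}_{\omega}(f_i)$ for every $i$, we obtain $\mathrm{in}_{\omega'}(S) = \mathrm{in}_{\omega}(S) = \KK[\mathrm{in}_{\omega'}(f_1),\dots,\mathrm{in}_{\omega'}(f_s)]$, which is exactly the assertion that $\mathcal{F}$ is a SAGBI basis with respect to $\omega'$. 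So the entire content lies in the inclusion $\{\,\omega' : \omega'\sim_{\mathcal{F}}\omega\,\}\subseteq C[\omega]$ recorded just before the statement.

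To prove that inclusion I would pass to a reduced SAGBI basis. Applying the two reduction operations described above to $\mathcal{F}$ (deleting generators whose leading monomial is a product of others, and subducting tails) produces a reduced SAGBI basis $H = \{h_1,\dots,h_k\}$ for $\mathrm{in}_{\omega}(S)$ with respect to $\omega$, and by the preceding lemma its equivalence class is cut out exactly by the leading-term conditions $C[\omega] = \{\,\omega' : \mathrm{in}_{\omega'}(h) = \mathrm{in}_{\omega}(h)\text{ for all } h\in H\,\}$. Hence it suffices to show that every $\omega'\sim_{\mathcal{F}}\omega$ satisfies $\mathrm{in}_{\omega'}(h) = \mathrm{in}_{\omega}(h)$ for each $h\in H$.

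The key step is that each reduction operation preserves agreement of leading terms at $\omega$ and $\omega'$, which I would establish by induction over the sequence of operations. Two facts drive this. First, initial forms are multiplicative: for any product $p = c\prod_i g_i^{e_i}$ of current generators one has $\mathrm{in}_{\omega'}(p) = c\prod_i \mathrm{in}_{\omega'}(g_i)^{e_i} = c\prod_i \mathrm{in}_{\omega}(g_i)^{e_i} = \mathrm{in}_{\omega}(p)$, using the inductive hypothesis that the current generators already agree. Second, in a tail subduction step one replaces a generator $f$ by $f - p$, where $\mathrm{in}_{\omega}(p)$ equals a tail monomial $t$ of $f$; since $t\neq \mathrm{in}_{\omega'}(f) = \mathrm{in}_{\omega}(f)$ is another monomial of $f$, it has strictly smaller $\omega'$-weight than $\mathrm{in}_{\omega'}(f)$, and by multiplicativity $\mathrm{in}_{\omega'}(p) = t$ as well, so subtracting $p$ cannot disturb the $\omega'$-leading term: $\mathrm{in}_{\omega'}(f - p) = \mathrm{in}_{\omega'}(f) = \mathrm{in}_{\omega}(f - p)$. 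Deletion steps alter no polynomials, so agreement is trivially maintained. Carrying this induction through the whole reduction yields $\mathrm{in}_{\omega'}(h) = \mathrm{in}_{\omega}(h)$ for all $h\in H$, whence $\omega'\in C[\omega]$ and the proof concludes as above.

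The main obstacle is precisely this inductive bookkeeping: one must verify that each $h\in H$, although it is a nontrivial polynomial combination of the original $f_i$ produced during $\omega$-subduction, retains its leading term verbatim under \emph{every} $\omega'$ in the equivalence class, not merely at $\omega$ itself. The reason the argument avoids circularity---we are, after all, proving a statement about $\omega'$ using a basis tailored to $\omega$---is that the only properties of subduction ever invoked are multiplicativity of initial forms and the fact that subducted tail terms stay strictly sub-leading, both of which are insensitive to replacing $\omega$ by any $\omega'\sim_{\mathcal{F}}\omega$.
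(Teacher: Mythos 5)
Your proposal is correct and follows essentially the same route as the paper: reduce to showing that the $\sim_{\mathcal{F}}$-equivalence class of $\omega$ is contained in the subalgebra class $C[\omega]$, by passing to the reduced SAGBI basis via the two reduction operations and checking that each operation only weakens (or preserves) the leading-term conditions cutting out the cone. Your inductive argument for the tail-subduction step---using multiplicativity of $\mathrm{in}_{\omega'}$ on products of generators to show the subtracted terms stay strictly sub-leading for every $\omega'\sim_{\mathcal{F}}\omega$---actually spells out a point the paper dispatches with the brief remark that operation (2) ``does not change the set of leading terms,'' so it is a welcome elaboration rather than a deviation.
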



\begin{proof} \emph{(of Theorem \ref{cor2})}
The proof is based on Lemma \ref{polyhedron_lem} and Corollary \ref{cor_cones}.   

Algorithm \ref{alg:SAGBI_detection} iterates over all normal cones to the vertices of $\mathrm{New_{aff}}(\mathcal{F})$, checking all possible equivalence classes with respect to~$\mathcal{F}$. Corollary \ref{cor_cones} states that this will be enough to find all weight vectors $\omega$ with respect to which $\mathcal{F}$ is a SAGBI basis. Indeed, assume there is some $\omega$ such that $\mathcal{F}$ forms a SAGBI w.r.t.~$\omega$ and $\omega$ is not in the output of the Algorithm \ref{alg:SAGBI_detection}. Consider the normal cone of the affine Newton polyhedron corresponding to $\omega$,
\[ \{\omega' \in \RR^n \colon \mathrm{in}_{\omega}(f) = \mathrm{in}_{\omega'}(f) \text{ for all } f\in \mathcal{F}\}\label{cone2}. \]
By Corollary \ref{cor_cones} $\mathcal{F}$ forms a SAGBI with respect to any weight $\omega'$ from this cone and $\omega \sim_{\KK[\mathcal{F}]} \omega'$. Thus, some $\omega'$ from the cone \eqref{cone2} should be in the output of the Algorithm \ref{alg:SAGBI_detection} and we can consider this weight as a representative of the equivalence class with respect to the subalgebra $\KK[\mathcal{F}]$.
\end{proof}
\begin{rem}
As opposed to ideals, not every subalgebra~$S \subseteq \KK[\mathbf{x}]$ has finitely many distinct initial subalgebras, so there might be infinitely many cones of the form~\eqref{cone}. For such an example, we refer the reader to \cite[Example 5.3]{kuroda}.
\end{rem}
\subsection{Connection to tropical geometry}
The SAGBI detection algorithm is applied to answer the question: \emph{is there a term order $\omega$ such that the given generators of a finitely generated subalgebra of a polynomial ring are the SAGBI basis with respect to~$\omega$?} A similar question is addressed by Kaveh and Manon \cite{KM19} in the more general setting of Khovanskii bases. 
The difference between their approach and ours comes down to how the input is given.
Our input is given by the parametrization of a unirational variety, while the answer of Kaveh and Manon is given in terms of the ideal defining this variety. 

More precisely, consider a finite set of polynomials~$\mathcal{F} = \{f_1,\ldots,f_s\}$. Let $I$ be the kernel of the map~\eqref{param_map}. The \emph{tropical
variety} $\mathcal{T}(I)$ associated to $I$ is the set of vectors $v \in \RR^s$ whose associated ideal
of initial forms $\mathrm{in}_{v}(I)$ contains no monomials. $\mathcal{T}(I)$ is endowed with
the structure of a polyhedral fan since it is supported on a closed subfan of a Gröbner fan of $I$. We call $v \in \mathcal{T}(I)$ a \emph{prime point} of $\mathcal{T}(I)$ when $\mathrm{in}_{v}(I)$ is a prime binomial ideal,
and the open face $\sigma$ of the Gröbner fan of~$I$ containing $v$ in its relative interior is a \emph{prime cone}. Then by \cite{KM19}, the existence of a term order $\omega$ w.r.t.~which $\mathcal{F}$ is the SAGBI basis is equivalent to the existence of a prime cone $\sigma$ in the tropical variety $\mathcal{T}(I)$. Now let $\omega$ be from the output of Algorithm \ref{alg:SAGBI_detection} and $C$ be the corresponding intersection of the normal cone to some vertex of $\mathrm{New}(\mathcal{F})$ with $\RR_{\geq 0}$. Then applying Theorem \ref{sagbi_crit2} we obtain that $A^{T} \cdot C$ gives us a prime cone of the tropical variety $\mathcal{T}(I)$, where~$A$ is the~$n \times s$ matrix whose columns are the vectors~$\alpha_i \in \NN^{n}$ occuring as leading exponents of the~$f_i$, i.e.,~$\mathrm{in}_{\succ}(f_i(\mathbf{x})) = \mathbf{x}^{\alpha_i}$.
\begin{example}
    Consider the projective plane curve given by the solution set of the equation~$z_1z_3 - z_3^2 - z_2^2 = 0$. The tropical variety $\mathcal{T}$ of $z_1z_3 - z_3^2 - z_2^2$ is the union of the three half-planes $\QQ(1,1,1) + \QQ_{\geq 0}(0,1,0)$, $\QQ(1,1,1) + \QQ_{\geq 0}(1,0,0)$, $\QQ(1,1,1) + \QQ_{\geq 0}(-2,-1,0)$ with initial forms $z_1z_3 - z_3^2$, $z_3^2 + z_2^2$ and $z_1z_3 - z_2^2$, respectively.

    Take the parameterization of the curve given by
    $(x, y) \mapsto [x^2 + y^2: xy: y^2]$. From the Example \ref{ex::1} we have that the set $\mathcal{F} = \{ x^2 + y^2, xy, y^2\}$ forms a~SAGBI basis with respect to any weight $\omega$ from the cone $C$ is~$\{\omega \mid \omega_1 > \omega_2 \geq 0\}$. By multiplying all such vectors $\omega$ by matrix $A^{T} = \begin{footnotesize}\begin{pmatrix}
        2 & 1 & 0\\
        0 & 1 & 2 
    \end{pmatrix} \end{footnotesize}$ we obtain a half-plane $\{(2\omega_1,  \omega_1 + \omega_2, 2\omega_2) \mid \omega_1 > \omega_2 \geq 0\}$ consisting of vectors opposite to the vectors from the half-plane $\QQ(1,1,1) + \QQ_{\geq 0}(-2,-1,0)$. The vectors from this half-plane define a flat toric degeneration of our curve to the rational normal quadric curve.

    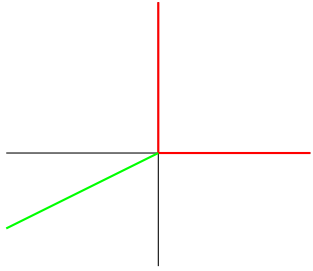
\begin{figure}
\begin{tikzpicture}
  \draw[-] (0,-1.5) -- (0,2);
  \draw[-] (-2,0) -- (2,0);
  \draw[thick, red] (0,0) -- (0,2);
  \draw[thick, red] (0,0) -- (2,0);
  \draw[thick, green] (0,0) -- (-2,-1);
\end{tikzpicture}
\caption{The tropical variety $\mathcal{T}/\QQ(1,1,1)$.}
\end{figure}
    The sign difference results from using $\max$ convention for valuation coming from term order and $\min$ convention for the tropical variety.
\end{example}
\subsection{Computational Complexity}
In this section, we analyze the complexity of SAGBI basis detection. 
Our main result is an analogue of\cite[Theorem 3.2.6]{GriStu93}, which provides upper bounds on the complexity of Gr\"{o}bner basis detection.
This previous result states that Algorithm~\ref{alg:gb_detection} runs in $\mathcal{O}(s^{n+2} m^{2n-1} d^{(2n+1)n})$ arithmetic operations, where $s$ denotes the number of input polynomials, each polynomial has at most~$m$ monomials of total degree at most~$d$, and~$n$ is the number of variables.
Thus, under a unit-cost computation model in the regime where the number of variables $n$ in the input is fixed, Gr\"{o}bner bases can be detected in polynomial time.

In~\Cref{thm:sagbi-complexity} below, we prove an analogous result for the SAGBI basis detection problem for homogeneous polynomial algebras.
Our goal is not to provide the sharpest upper bounds possible. 
Rather, we seek to show that SAGBI basis detection is fixed-parameter tractable, just as in the case of Gr\"{o}bner bases.

\begin{theorem}\label{thm:sagbi-complexity}
For fixed $n$, homogeneous SAGBI basis detection can be solved in
\[\begin{split}
   O\left( {{s + s^2d^n} \choose s} \cdot m^{2n-1} \cdot s^{4n^2+11n+4} \cdot d^{2n^3+8n^2+5n+1}\right)
\end{split}
\]
arithmetic operations.    
\end{theorem}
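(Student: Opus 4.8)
\emph{Proof plan.} The plan is to mirror the complexity analysis of Gr\"obner basis detection in \cite{GriStu93}, isolating the three cost contributions of Algorithm~\ref{alg:SAGBI_detection}: (i) the combinatorial cost of enumerating the candidate weights, (ii) the linear-programming cost of testing, for each vertex, whether its normal cone meets the positive orthant and of extracting a rational representative $\omega$, and (iii) the cost of the per-weight SAGBI test via \Cref{sagbi_crit,sagbi_crit2}. The first two contributions are essentially identical to the Gr\"obner case and are responsible for the factor $m^{2n-1}$: the outer loop ranges over the vertices of $\mathrm{New}(\mathcal{F})$ whose normal cone in $\mathrm{New_{aff}}(\mathcal{F})$ meets the positive orthant, which by \Cref{polyhedron_lem} are in bijection with the relevant equivalence classes of term orders; since $\mathrm{New}(\mathcal{F})$ is a Minkowski sum of $s$ polytopes in $\RR^n$ each with at most $m$ vertices, the number of iterations is bounded by the Gritzmann--Sturmfels vertex count $O(m^{2n-1})$ (with the dependence on $s$ and $d$ absorbed into the polynomial factors below), and \Cref{cor_cones} guarantees that one representative $\omega$ per class suffices. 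The genuinely new work, and the source of the remaining factors, is bounding (iii) in the homogeneous setting.

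For a fixed candidate weight $\omega$ I would run the subduction test of \Cref{sagbi_crit}. This requires first a binomial generating set of the toric ideal $I_A$ of relations among the leading monomials $\mathbf{x}^{\alpha_1},\dots,\mathbf{x}^{\alpha_s}$, where $A\in\NN^{n\times s}$ has columns $\alpha_i$ of total degree at most $d$. The key quantitative input is a degree bound $D$ on such a generating set: since each maximal minor of $A$ is bounded via Hadamard's inequality by roughly $(\sqrt{n}\,d)^{n}$, the primitive vectors in $\ker_{\ZZ}A$ have coordinates controlled by these minors, and one obtains a bound of order $D=O(s^{2}d^{n})$ on the $y$-degree of the generators, i.e.\ on the number of factors $f_i$ a single $S$-polynomial $P(\mathcal{F})$ can involve. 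Homogeneity is essential here: it forces every $S$-polynomial to be $\mathbf{x}$-homogeneous, so subduction preserves the $\mathbf{x}$-degree, terminates, and never produces a monomial outside a fixed finite list. Consequently the monomials $y^{\beta}=\prod_i y_i^{\beta_i}$ that can arise while subducing a generator of $I_A$ all have degree at most $D$ in the $s$ variables $y_1,\dots,y_s$, of which there are $\binom{s+D}{s}=\binom{s+s^{2}d^{n}}{s}$.

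Assembling the estimate, the per-weight test costs at most (number of $S$-polynomials to reduce) $\times$ (cost of a single subduction). The number of generators of $I_A$ and the number of reduction steps per generator are polynomial in $s$ and $d$ once $n$ is fixed, since each step clears a leading $\mathbf{x}$-monomial lying in $\mathrm{in}_{\omega}(S)$ and there are only $O((sd)^{n})$ candidate $\mathbf{x}$-monomials in $\mathrm{New}(\mathcal{F})\subseteq[0,sd]^{n}$; meanwhile each step resolves a membership test whose linear algebra ranges over the $\binom{s+s^{2}d^{n}}{s}$ monomials identified above. Multiplying the $O(m^{2n-1})$ outer iterations by these per-weight costs and collecting all remaining polynomial dependence on $s$ and $d$ yields the stated bound $O\!\left(\binom{s+s^{2}d^{n}}{s}\, m^{2n-1}\, s^{4n^{2}+11n+4}\, d^{2n^{3}+8n^{2}+5n+1}\right)$; I would not try to optimize the exponents, only to verify that each compounding bound is polynomial in $s$ and $d$ for fixed $n$.

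The main obstacle is step (iii), specifically making the degree bound $D=O(s^{2}d^{n})$ rigorous and controlling the subduction. Two points need care. First, one must prove that a binomial generating set of $I_A$ exists within this degree, using the bounds on $\ker_{\ZZ}A$ together with the homogeneity of $A$; this is exactly the ingredient with no analogue when the $f_i$ are inhomogeneous, where, as in the Remark following \Cref{cor2}, there may be infinitely many initial subalgebras and no uniform bound. Second, one must confirm that subduction, rather than a full Gr\"obner basis computation on $I$ via \Cref{sagbi_crit2} (whose element degrees could be far larger), stays within degree $D$; this again follows from $\mathbf{x}$-homogeneity, since at every step the current remainder is $\mathbf{x}$-homogeneous of the fixed degree of $P(\mathcal{F})$, so no higher-degree monomials are introduced and the total number of reductions is governed by the finite monomial count above.
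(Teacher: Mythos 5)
Your outline reproduces the paper's architecture (enumerate the normal cones of $\mathrm{New}(\mathcal{F})$ via Lemma~\ref{polyhedron_lem}, reduce to one representative per cone by Corollary~\ref{cor_cones}, then bound the cost of the subduction test per weight), but the one genuinely new quantitative ingredient --- the degree bound $R = s^2 d^n$ on a binomial \emph{generating set} of $I_A$ --- is exactly where your argument has a gap. Hadamard's inequality together with Cramer's rule controls the coordinates of the \emph{circuits} of $A$, i.e.\ the primitive kernel vectors of minimal support; but the circuits do not in general generate the toric ideal $I_A$, and ``the primitive vectors in $\ker_{\ZZ}A$'' as a whole form an infinite set with unbounded coordinates. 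To make your route rigorous you would need a Graver-basis degree bound in terms of the maximal minors (such bounds exist, but you do not invoke one). The paper avoids this entirely: it bounds the degrees of minimal generators by the Castelnuovo--Mumford regularity, using $\mathrm{reg}(I_A) \le s\cdot\mathrm{codim}(I_A)\cdot\deg(I_A) \le s^2\deg(I_A)$ from Sturmfels' \emph{Equations Defining Toric Varieties}, and then bounds $\deg(I_A) \le d^n$ by Kushnirenko's theorem, since $\conv(A)$ sits inside a lattice simplex of normalized volume $d^{n}$. This is the step you should not treat as routine.

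Two further discrepancies are worth flagging. First, your count of ``$O((sd)^n)$ candidate $\mathbf{x}$-monomials'' for the number of subduction steps is wrong: the $S$-polynomial obtained from a degree-$R$ binomial has $\mathbf{x}$-degree up to $dR = s^2 d^{n+1}$, so its monomials live in $[0,dR]^n$, not $[0,sd]^n$; the paper instead bounds the step count by $(n\cdot dR\cdot(2nd)^{2n}+1)^n$ using the integrality and size bound on $\omega$ from Gritzmann--Sturmfels. Second, the paper performs the per-step membership test by solving the integer program $A\mathbf{v}=\mathbf{b}$ with the bound $O(s^{2n+2}(ndR)^{(n+1)(2n+1)})$, whereas you propose brute-force enumeration over monomials of $y$-degree at most $D$ --- a workable alternative, except that the relevant exponent vectors $\mathbf{v}$ can have $|\mathbf{v}|$ up to $dR$, not $R$. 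Since the theorem asserts specific exponents $s^{4n^2+11n+4}d^{2n^3+8n^2+5n+1}$, ``absorbing all remaining polynomial dependence'' does not by itself establish the stated bound, only the qualitative fixed-parameter-tractability conclusion.
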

\begin{proof}    By \cite[Theorem 2.3.7]{GriStu93}, we can compute the normal fan of $\mathrm{New}(\mathcal{F})$ in at most~$O(s^{n}m^{2n-1})$ arithmetic operations.
Let $\omega $ be a weight vector from one of the normal cones.

According to Theorem \ref{sagbi_crit}, to verify that $\mathcal{F}$ forms a SAGBI basis with respect to~$\omega$ we should find a generating set of binomials for the ideal $I_A$, where $A$ is the~$n \times s$ matrix whose columns are the vectors $\alpha_i \in \NN^n$
occurring as leading exponents of the $f_i$, i.e., $\mathrm{in}_{\omega}(f_i(\mathbf{x})) = \mathbf{x}^{\alpha_i}$. First, we bound the total degree of generators of $I_A$. For this we consider the \emph{Castelnuovo-Mumford regularity} $reg(I_A)$, which is the maximum of the numbers $\deg(\sigma) - i$ where $\sigma$ is
a minimal $i$-th syzygy of $I_A$. 
Since the $i = 0$-th syzygies are simply the minimal generators of $I_A,$ $reg(I_A)$ is an upper bound for the maximal total degree of a~generator of $I_A$.

Let $I_A$ define a $n_A$-dimensional toric variety in $\PP^{s-1}$.
By \cite[Theorem 4.5]{Sturmfels1996EquationsDT}, $$reg(I_A) \leq s \cdot \mathrm{codim}(I_A) \cdot \deg(I_A) \le s^2 \cdot \deg(I_A).$$
The lattice $\mathbb{Z} A \subset \mathbb{Z}^n$ spans a vector space of dimension $n_A + 1$.
By Kushnirenko's Theorem, $\deg(I_A)$ equals the normalized volume of $\operatorname{conv} (A)$ inside of this lattice, see e.g. \cite[Section 3.1.2]{Sottile2006RealST}.
By assumption, $|\alpha_i| = \alpha_{i,1} +\ldots+\alpha_{i,n}$ is bounded by~$d$ for each $\alpha_i$, and hence $\conv (A)$ is contained in a lattice simplex of normalized volume~$d^{n_A} \le d^n.$
Therefore, generators of $I_A$ have degrees at most $R:=s^2 d^n$.


A crude estimate for the number of generators of $I_A$ is simply the number of monomials in variables $y_1,\ldots,y_s$ of degree at most $R$, i.e., ${{s + R} \choose s}$.

Consider now a single binomial generator of $I_A$ with degree at most $R$.
After substituting each of its variables with the corresponding $f_i$, we obtain a polynomial~$p(\mathbf{x})$ of degree at most~$dR$. 
Applying a single iteration of the subduction algorithm, we must find (or determine the non-existence of) $v_1,\ldots, v_s \in \mathbb{Z}_{\ge 0}$ s.~t.
$$\mathrm{in}_{\omega}(p(\mathbf{x})) = \mathrm{in}_{\omega}(f_1(\mathbf{x}))^{v_1}\cdots\mathrm{in}_{\omega}(f_s(\mathbf{x}))^{v_s}.$$
This is equivalent to finding a solution to the integer linear program $A\mathbf{v} = \mathbf{b}$, where all $v_i \geq 0$. Note that the columns of $A$ are the exponents in the monomial $\mathrm{in}_{\omega}(f_i(\mathbf{x}))$ and $\mathbf{b}$ is the exponent of $\mathrm{in}_{\omega}(p(\mathbf{x}))$. Thus, both $A$ and $\mathbf{b}$ have entries from $\{0, 1, \ldots, dR\}$. Then, by \cite[Corollary 1]{ComplexityIntegerProg}, there is an algorithm for solving this integer program which can be carried out in time $$O(s^{2n+2}(ndR)^{(n+1)(2n+1)}).$$ After a subduction step, the new polynomial $p'(\mathbf{x}) : = p(\mathbf{x}) - f_1(\mathbf{x})^{v_1}\cdots f_s(\mathbf{x})^{v_s}$, if nonzero, has a leading monomial $\mathbf{x}^{\mathbf{c}}$ with 
\begin{equation}\label{latticebreadth}
\langle w, \mathbf{c} \rangle \leq \langle w, \mathbf{d} \rangle,    
\end{equation}
where $\mathbf{d} \in \ZZ^n$ is the leading exponent on the previous subduction step. By \cite[Lemma 3.2.4]{GriStu93},
we can assume that $\omega$ is integral and has coordinates whose absolute values are bounded from above by $(2nd)^{2n}$. Thus, the number of subduction steps is at most the number of solutions in $\mathbf{c}\in \mathbb{Z}_{\ge 0}^n$ to the inequality~\eqref{latticebreadth}. Applying~\cite[Lemma 3.2.5]{GriStu93}, this number is bounded by $(n\cdot dR\cdot (2nd)^{2n} + 1)^n$.

Carrying out the analysis of subduction for each toric generator of $I_A$, and all possible choices of $\omega ,$ we deduce that the total number of arithmetic operations is
\[\begin{split}
    &O(s^{n}m^{2n-1}) \cdot {{s + R} \choose s} \cdot O(s^{2n+2}(ndR)^{(n+1)(2n+1)}) \cdot (2^{2n}(nd)^{2n+1}R  + 1)^n = \\
= &\ O\left( {{s + s^2d^n} \choose s} \cdot m^{2n-1}s^{3n+2}(ns^2d^{(n+1)})^{(n+1)(2n+1)} \cdot (2^{2n}(nd)^{2n+1}s^2d^n  + 1)^n\right).
\end{split}
\]
For a fixed value of $n$, this is equivalent to
\[
O\left( {{s + s^2d^n} \choose s} \cdot m^{2n-1} \cdot s^{4n^2+11n+4} \cdot d^{2n^3+8n^2+5n+1}\right).
\]
Thus, under the additional assumptions that $s$ is fixed and all field arithmetic operations have unit cost, SAGBI detection can be solved in polynomial time.
\end{proof}

\subsection{Applications}

Our results on the correctness and complexity of homogeneous SAGBI basis detection are of potential interest in applications to the theory of Newton-Okounkov bodies.
We briefly recall the highlights of this theory.

\subsubsection{Newton-Okounkov bodies}\label{sec:NObodies}In this subsection, we assume that the polynomial subalgebra $S = \KK [\mathcal{F}]$  is equipped with a positive $\mathbb{Z}$-grading: that is,
\begin{equation*}\label{eq:grading}
S =\displaystyle\bigoplus_{k\ge 0} S_i, 
\quad 
S_0 = \KK ,
\, 
S_i S_j \subseteq S_{i+j}.
\end{equation*}
A natural grading arises when the algebra generators $\mathcal{F}$ are homogeneous polynomials.
Another common setting arises as follows: given $L \subset \KK [\mathbf{x}]$, a vector space over $\KK $ spanned by finitely-many polynomials, then we may define a graded algebra via
\begin{equation}\label{eq:AL}
S_L = \displaystyle\bigoplus_{k\ge 0} t^k L^k \subset \KK [t, \mathbf{x}].
\end{equation}
Consider a semigroup $\mathcal{S}(S_L, \succ)\coloneqq \{ \text{exponent of } \mathrm{in}_{\succ}(f) \mid f \in S_L \}$.
\begin{definition}
We define the \emph{Newton-Okounkov cone} $C(S_L, \succ)$ to be the closure of the convex hull of $\mathcal{S}(S_L, \succ)$. The \emph{Newton-Okounkov body} $\Delta (S_L,\succ)$ is defined to be the intersection of the Newton-Okounkov cone $C(S_L, \succ)$ with the plane $\{1\} \times \RR^n$.
\end{definition}
When $\mathcal{S}(S_L, \succ)$ is finitely generated, i.e., $S_L$ admits a finite SAGBI basis, then the cone $C(S_L, \succ)$ is a rational polyhedral cone and $\Delta (S_L,\succ)$ is a rational polytope.

Note that we are conducting our discussion in the special case of finitely generated subalgebras of a polynomial ring and the corresponding projective unirational varieties. In general, Newton-Okounkov bodies can be defined for a positively graded domain equipped with a valuation, see eg.~\cite{KK12} or \cite{Lazarsfeld2009}. Newton-Okounkov bodies generalize Newton polytopes for toric varieties and contain significant information about the algebra or the corresponding projective variety. 
\begin{thm} \cite[Corollary 3.2]{KK12} Let $X = \mathrm{Proj}(S_L) \subset \PP^{\dim L -1}$. Then the dimension $d$ of the convex body~$\Delta (S_L,\succ)$ equals the dimension of the projective variety $X$, and the $d$-dimensional Euclidean volume of~$\Delta (S_L,\succ)$ multiplied by 
$$\frac{d!}{\mathrm{ind}(\mathcal{S}(S_L, \succ))}$$ equals the degree of $X$. 
Here, $\mathrm{ind}(\mathcal{S}(S_L, \succ))$ refers to the index of the sublattice spanned by $\mathcal{S}(S_L, \succ)$ inside~$\ZZ^n$.
\end{thm}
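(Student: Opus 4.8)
The statement is Corollary 3.2 of Kaveh--Khovanskii, and the natural route is to reduce it to two classical facts---the theory of Hilbert polynomials and Khovanskii's asymptotic counting of points in graded semigroups---bridged by the observation that a term-order valuation has \emph{one-dimensional leaves}. First I would record, for each degree $k$, that the level set
\[
\mathcal{S}_k := \{ \text{exponent of } \mathrm{in}_\succ(f) : f \in t^k L^k \setminus \{0\} \}
\]
satisfies $|\mathcal{S}_k| = \dim_\KK (S_L)_k$. This is the usual Gr\"obner-style echelon argument: polynomials in a finite-dimensional space with pairwise distinct leading exponents are linearly independent, and one may always choose a basis realizing distinct leading exponents. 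Hence $|\mathcal{S}_k|$ is exactly the Hilbert function $H_X(k) = \dim_\KK (S_L)_k$ of the embedded variety $X = \mathrm{Proj}(S_L)$.

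By Hilbert's theorem, for $k \gg 0$ the function $H_X(k)$ agrees with a polynomial of degree $d = \dim X$ whose leading coefficient is $\deg(X)/d!$. This pins down the arithmetic right-hand side of the claimed identity purely in terms of the variety $X$, and reduces the problem to computing the same leading coefficient from the convex geometry of $\Delta(S_L, \succ)$.

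The main obstacle is the third ingredient: the asymptotic equidistribution of the graded semigroup, namely
\[
\lim_{k \to \infty} \frac{|\mathcal{S}_k|}{k^d} = \frac{\vol_d(\Delta(S_L, \succ))}{\mathrm{ind}(\mathcal{S}(S_L, \succ))}.
\]
This is precisely Khovanskii's theorem on semigroups of integral points. The strategy is to sandwich $\mathcal{S}_k$ between lattice points of the dilate $k\Delta$: since $\mathcal{S}$ lies in the Newton-Okounkov cone $C(S_L, \succ)$ whose slice at level one is $\Delta$, the points of $\mathcal{S}_k$ lie in $k\Delta$ intersected with the sublattice $\Lambda := \langle \mathcal{S} \rangle$. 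Counting lattice points of $\Lambda$ in a convex body yields $\vol_d(k\Delta)/\mathrm{covol}(\Lambda) = k^d \vol_d(\Delta)/\mathrm{ind}(\mathcal{S})$ to leading order, the covolume relative to the ambient lattice being exactly the index. The delicate point---handled by Khovanskii's theorem---is that the semigroup need not fill the cone at low levels, so one must control the defect between $\mathcal{S}_k$ and all lattice points of $k\Delta$ and show it is of lower order $O(k^{d-1})$; this is where the finite generation hypothesis (equivalently, the existence of a finite SAGBI basis for $S_L$) enters.

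Finally I would combine the three steps into
\[
\frac{\deg(X)}{d!} = \lim_{k \to \infty} \frac{H_X(k)}{k^d} = \lim_{k \to \infty} \frac{|\mathcal{S}_k|}{k^d} = \frac{\vol_d(\Delta)}{\mathrm{ind}(\mathcal{S})},
\]
which rearranges to the degree formula. The dimension equality $\dim \Delta = \dim X$ falls out simultaneously, as both sides equal the degree $d$ of the Hilbert polynomial, equivalently the dimension of the Newton-Okounkov cone minus one. I expect the third step to demand by far the most care, being the point where the combinatorics of semigroups meets the geometry of convex bodies.
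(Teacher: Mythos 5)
This statement is quoted verbatim from Kaveh--Khovanskii \cite[Corollary 3.2]{KK12}; the paper offers no proof of its own, so there is nothing internal to compare your argument against. On its own terms, your outline is the standard (and correct) route to the result: (i) the one-dimensional-leaves property of a term-order valuation gives $|\mathcal{S}_k|=\dim_\KK (S_L)_k$, (ii) Hilbert's theorem identifies $\lim_k H_X(k)/k^d$ with $\deg(X)/d!$ and $d$ with $\dim X$, and (iii) the theorem on asymptotics of graded semigroups of integral points identifies the same limit with $\vol_d(\Delta)/\mathrm{ind}(\mathcal{S})$. The one inaccuracy is your claim that finite generation of the semigroup (a finite SAGBI basis) is what controls the defect between $\mathcal{S}_k$ and the lattice points of $k\Delta$ in step (iii): the Khovanskii-type regularization theorem used in \cite{KK12} holds for \emph{arbitrary} graded subsemigroups and requires no finite generation --- indeed the corollary is stated there for general graded linear series, where $\Delta$ need not even be a polytope. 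Finite generation only buys you rationality and polyhedrality of $\Delta$, as the surrounding text of this paper notes; it is not needed for the volume identity. With that caveat removed, your sketch is a faithful reconstruction of the cited proof.
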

\subsection{Optimal term orders}
For most choices of input, Algorithms~\ref{alg:gb_detection} and~\ref{alg:SAGBI_detection} will return a negative answer. Nevertheless, among all the equivalence classes of term orders with respect to $\sim_{\mathcal{F}}$ found by these algorithms, it is still possible to choose the ``best" with respect to some criterion. Gritzmann and Sturmfels suggest measuring how far a~set of polynomials is from being a Gröbner basis by looking at the Hilbert function -- see \cite[Section 3.3]{GriStu93}. Similar ideas work for SAGBI bases in the homogeneous case. Let~$\mathcal{F}$ be a set of $s$ homogeneous polynomials on $n$ variables of total degree at most~$d$, and denote $S := \KK[\mathcal{F}]$. For two non-equivalent term orders $\omega_1$ and $\omega_2$, consider the vector
\begin{equation}\label{vectorHilb}
\left( h_{\KK[\mathrm{in}_{\omega_1}(\mathcal{F})]}(t) - h_{\KK[\mathrm{in}_{\omega_2}(\mathcal{F})]}(t)\right)_{t = 1,\ldots, s^2d^{n+1}},    
\end{equation}
where $h_{\KK[\mathrm{in}_{\omega_i}(\mathcal{F})]}(t)$ is the Hilbert function of the monomial algebra $$\KK[\mathrm{in}_{\omega_i}(\mathcal{F})] = \KK[\mathrm{in}_{\omega_i}(f_1),\ldots, \mathrm{in}_{\omega_i}(f_s)].$$
Analogously to\cite[Section 3.3]{GriStu93}, we say that $\omega_1$ is \emph{preferable} to $\omega_2$ if the first nonzero entry in the vector \eqref{vectorHilb} is positive.
We call a term order $\omega$ \emph{optimal} if it is preferable to any other term order.
\begin{lem}\label{HilbFunc}
    The set $\mathcal{F}$  is a SAGBI
basis of~$S$ with respect to a term order $\omega$ if and only if $h_{S}(t) = h_{\KK[\mathrm{in}_{\omega}(\mathcal{F})]}(t)$ for every $t = 1,\ldots, s^2d^{n+1}$.
\end{lem}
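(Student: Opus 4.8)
The plan is to reinterpret the SAGBI condition as an equality of two graded algebras, reduce that equality to a comparison of Hilbert functions, and then bound the degrees at which a discrepancy can first occur. First, recall that $\mathcal{F}$ is a SAGBI basis of $S$ with respect to $\omega$ precisely when $\KK[\mathrm{in}_{\omega}(\mathcal{F})] = \mathrm{in}_{\omega}(S)$. Since the $f_i$ are homogeneous, each $\mathrm{in}_{\omega}(f_i)$ is homogeneous of the same degree, so both $\KK[\mathrm{in}_{\omega}(\mathcal{F})]$ and $\mathrm{in}_{\omega}(S)$ are graded subalgebras of $\KK[\mathbf{x}]$, and there is always a graded inclusion $\KK[\mathrm{in}_{\omega}(\mathcal{F})] \subseteq \mathrm{in}_{\omega}(S)$. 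An inclusion of graded vector spaces is an equality if and only if the two graded pieces have equal dimension in every degree; hence $\mathcal{F}$ is a SAGBI basis if and only if $h_{\KK[\mathrm{in}_{\omega}(\mathcal{F})]}(t) = h_{\mathrm{in}_{\omega}(S)}(t)$ for all $t$.

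Next I would record the standard fact that passing to the initial algebra preserves the Hilbert function, i.e. $h_{S}(t) = h_{\mathrm{in}_{\omega}(S)}(t)$ for all $t$. This follows because the initial form is multiplicative, so $\mathrm{in}_{\omega}(S)$ is spanned as a $\KK$-vector space by $\{\mathrm{in}_{\omega}(g) \mid g \in S\}$, and in each degree $t$ the weight $\omega$ induces a filtration of $S_t$ whose associated graded is exactly $\mathrm{span}\{\mathrm{in}_{\omega}(g) \mid g \in S_t\} = \mathrm{in}_{\omega}(S)_t$; passing to the associated graded preserves dimension. Combined with the previous paragraph, $\mathcal{F}$ is a SAGBI basis if and only if $h_{S}(t) = h_{\KK[\mathrm{in}_{\omega}(\mathcal{F})]}(t)$ for every $t \ge 0$.

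It remains to replace ``for every $t$'' by the finite range $t = 1, \ldots, s^2 d^{n+1}$, and this is the crux. The forward implication is immediate. For the converse I would argue contrapositively: if $\mathcal{F}$ is not a SAGBI basis, then by Theorem~\ref{sagbi_crit} some S-polynomial coming from a generator of the toric ideal $I_A$ fails to subduce to zero. As established in the proof of Theorem~\ref{thm:sagbi-complexity}, the Castelnuovo--Mumford regularity bound together with Kushnirenko's theorem shows that $I_A$ is generated in degree at most $R = s^2 d^n$; substituting the $f_i$ (each of $\mathbf{x}$-degree at most $d$) shows the corresponding failing S-polynomial $P(\mathcal{F})$ is homogeneous of $\mathbf{x}$-degree at most $dR = s^2 d^{n+1}$. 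Its nonzero $\mathcal{F}$-remainder $r$ lies in $S$ and is homogeneous of that same degree $t_0 \le s^2 d^{n+1}$, so by construction $\mathrm{in}_{\omega}(r) \in \mathrm{in}_{\omega}(S)_{t_0} \setminus \KK[\mathrm{in}_{\omega}(\mathcal{F})]_{t_0}$; hence the inclusion $\KK[\mathrm{in}_{\omega}(\mathcal{F})]_{t_0} \subsetneq \mathrm{in}_{\omega}(S)_{t_0}$ is strict and $h_{\KK[\mathrm{in}_{\omega}(\mathcal{F})]}(t_0) < h_{S}(t_0)$ with $t_0 \le s^2 d^{n+1}$. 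Thus a discrepancy already appears within the claimed finite range. I expect this last degree bound to be the main obstacle: everything hinges on controlling the degree of a witnessing S-polynomial via the regularity of $I_A$, and on verifying that a failed subduction genuinely forces a strict drop of the Hilbert function at that degree rather than merely at some uncontrolled higher degree.
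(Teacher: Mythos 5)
Your proposal is correct and follows essentially the same route as the paper's proof: the forward direction via $\KK[\mathrm{in}_{\omega}(\mathcal{F})]=\mathrm{in}_{\omega}(S)$ together with the invariance of the Hilbert function under passing to the initial algebra (which the paper cites from Bruns--Conca rather than reproving via the associated graded), and the converse via a witnessing S-polynomial whose degree is bounded by $s^2d^{n+1}$ using the regularity estimate from the proof of Theorem~\ref{thm:sagbi-complexity}, forcing a strict containment $\KK[\mathrm{in}_{\omega}(\mathcal{F})]_{t_0}\subsetneq \mathrm{in}_{\omega}(S)_{t_0}$ in that range. The only differences are expository.
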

\begin{proof}
  If $\mathcal{F}$ forms a SAGBI basis, then $\KK[\mathrm{in}_{\omega}(\mathcal{F})] = \mathrm{in}_{\omega}(S)$. We have
  \[h_{\KK[\mathrm{in}_{\omega}(\mathcal{F})]}(t) = h_{\mathrm{in}_{\omega}(S)}(t) = h_{S}(t)\]
  for every $t \in \NN$, where the last equality is true due to \cite[Proposition 1.6.2]{Bruns2}.

  Conversely, suppose that $\mathcal{F}$ is not a SAGBI. Then there exists a binomial generator~$p$ of $I_A$ such that $p(f_1,\ldots,f_s)$ does not subduce to zero. This means that there exists a non-zero normal form $r$ of this polynomial as \eqref{normalform} with $\mathrm{in}_{\omega}(r) \notin \KK[\mathrm{in}_{\omega}(\mathcal{F})]$. In the proof of~\Cref{thm:sagbi-complexity}, we established that $p(f_1,\ldots,f_s)$ has degree $t \leq s^2d^{n+1}$, as does~$r$. Then, since $\left(\KK[\mathrm{in}_{\omega}(\mathcal{F})]\right)_t$ is a proper subspace of $\left( \mathrm{in}_{\omega}(S)\right)_t$, we have
  \[\dim \left(\KK[\mathrm{in}_{\omega}(\mathcal{F})]\right)_t < \dim \left( \mathrm{in}_{\omega}(S)\right)_t = \dim S_t .\]
  This contradicts the fact that $h_{S}(t) = h_{\KK[\mathrm{in}_{\omega}(\mathcal{F})]}(t)$ for every $t = 1,\ldots, s^2d^{n+1}$.
\end{proof}
The next result follows immediately.
\begin{cor}
    Let $\mathcal{F}$ be a SAGBI basis with respect to $\omega$, then $\omega$ is optimal.
\end{cor}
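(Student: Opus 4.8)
The corollary claims: if $\mathcal{F}$ is a SAGBI basis with respect to $\omega$, then $\omega$ is optimal. By the definition given, $\omega$ is optimal if it is preferable to every other (non-equivalent) term order $\omega'$, meaning the first nonzero entry of the vector $\bigl(h_{\KK[\mathrm{in}_{\omega}(\mathcal{F})]}(t) - h_{\KK[\mathrm{in}_{\omega'}(\mathcal{F})]}(t)\bigr)_{t=1,\ldots,s^2d^{n+1}}$ is positive. So I must show that when $\mathcal{F}$ is a SAGBI basis for $\omega$, this leading difference is always positive (or the vector is zero, but non-equivalence should rule that out).

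**The plan.** The key is that $\mathcal{F}$ being a SAGBI basis for $\omega$ pins down $h_{\KK[\mathrm{in}_{\omega}(\mathcal{F})]}(t)$ to be the maximal possible value, namely $h_S(t)$. The plan is to combine Lemma~\ref{HilbFunc} with the general inequality $h_{\KK[\mathrm{in}_{\omega'}(\mathcal{F})]}(t) \le h_S(t)$ that holds for \emph{every} term order $\omega'$. First I would establish this inequality: for any $\omega'$, the monomial algebra $\KK[\mathrm{in}_{\omega'}(\mathcal{F})]$ is a subalgebra of $\mathrm{in}_{\omega'}(S)$, and since passing to initial algebras preserves the Hilbert function (by \cite[Proposition 1.6.2]{Bruns2}, as used in the proof of Lemma~\ref{HilbFunc}), we get $h_{\KK[\mathrm{in}_{\omega'}(\mathcal{F})]}(t) \le h_{\mathrm{in}_{\omega'}(S)}(t) = h_S(t)$ in each graded degree $t$. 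Second, by Lemma~\ref{HilbFunc}, the SAGBI hypothesis gives $h_{\KK[\mathrm{in}_{\omega}(\mathcal{F})]}(t) = h_S(t)$ for all $t = 1,\ldots,s^2d^{n+1}$.

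**Conclusion step.** Putting these together, for each coordinate $t$ in the relevant range I have $h_{\KK[\mathrm{in}_{\omega}(\mathcal{F})]}(t) - h_{\KK[\mathrm{in}_{\omega'}(\mathcal{F})]}(t) = h_S(t) - h_{\KK[\mathrm{in}_{\omega'}(\mathcal{F})]}(t) \ge 0$. Thus every entry of the comparison vector \eqref{vectorHilb} is nonnegative, so the first nonzero entry (if any) is automatically positive, giving preferability. The remaining point is that for $\omega'$ not equivalent to $\omega$ the vector is genuinely nonzero, so that $\omega$ is strictly preferable: if the vector were identically zero over the range $t=1,\ldots,s^2d^{n+1}$, then $h_{\KK[\mathrm{in}_{\omega'}(\mathcal{F})]}(t) = h_S(t)$ throughout that range, so Lemma~\ref{HilbFunc} would force $\mathcal{F}$ to be a SAGBI basis for $\omega'$ as well, and two reduced SAGBI bases with the same leading terms would make $\omega \sim_{\mathcal{F}} \omega'$, contradicting non-equivalence.

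**Main obstacle.** The only genuinely delicate point is this last non-equivalence argument: I must make sure that equality of Hilbert functions across the \emph{finite} truncation $t \le s^2d^{n+1}$ really forces $\omega \sim_{\mathcal{F}} \omega'$, rather than just agreement in low degrees. This is exactly where the degree bound $s^2d^{n+1}$ from the proof of Theorem~\ref{thm:sagbi-complexity} earns its keep: that bound was chosen precisely so that SAGBI-ness can be certified from finitely many Hilbert values, which is the content of Lemma~\ref{HilbFunc}. So the cleanest route is to lean entirely on Lemma~\ref{HilbFunc} for both directions, reducing the corollary to the elementary inequality $h_{\KK[\mathrm{in}_{\omega'}(\mathcal{F})]}(t) \le h_S(t)$ plus the observation that the $\ge 0$ vector is nonzero for inequivalent $\omega'$.
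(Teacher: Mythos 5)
Your core argument is exactly the ``immediate'' deduction the paper intends (it gives no written proof beyond citing Lemma~\ref{HilbFunc}): by Lemma~\ref{HilbFunc} the SAGBI hypothesis forces $h_{\KK[\mathrm{in}_{\omega}(\mathcal{F})]}(t)=h_S(t)$ on the whole range, while for any $\omega'$ the inclusion $\KK[\mathrm{in}_{\omega'}(\mathcal{F})]\subseteq \mathrm{in}_{\omega'}(S)$ together with \cite[Proposition 1.6.2]{Bruns2} gives $h_{\KK[\mathrm{in}_{\omega'}(\mathcal{F})]}(t)\le h_S(t)$, so every entry of the comparison vector \eqref{vectorHilb} is nonnegative and its first nonzero entry, if any, is positive. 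That part is correct and complete.

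The final ``strictness'' step, however, is wrong, and the claim it tries to establish is false. You assert that for $\omega'\not\sim_{\mathcal{F}}\omega$ the vector cannot be identically zero, arguing that the vector vanishing would make $\mathcal{F}$ a SAGBI basis for $\omega'$ and that ``two reduced SAGBI bases with the same leading terms'' force $\omega\sim_{\mathcal{F}}\omega'$. But being a SAGBI basis for both orders does not give the same leading terms, and $\mathcal{F}$-equivalence does not follow: the paper's own example of the elementary symmetric polynomials $\{x+y+z,\,xy+xz+yz,\,xyz\}$ is a SAGBI basis for six pairwise $\mathcal{F}$-inequivalent classes of term orders, and for any two of them the vector \eqref{vectorHilb} is identically zero (both monomial algebras have Hilbert function $h_S$). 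So inequivalent term orders with an identically zero comparison vector genuinely occur. The corollary is therefore only true under the weak reading of ``preferable'' (the first nonzero entry, \emph{if one exists}, is positive), which is precisely what your main chain already delivers; you should simply delete the last step rather than try to repair it, since under the strict reading the statement itself would fail on the symmetric-polynomial example.
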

\begin{rem}
    Lemma \ref{HilbFunc} gives another criterion for detecting SAGBI bases with respect to a particular term order. It is often more efficient than using Theorem~\ref{sagbi_crit}. In our implementation, we have both methods for SAGBI verification.
\end{rem}
It is interesting to consider other measures of closeness to being a SAGBI basis. Fix term order $\omega$ and let $\mathbb{V}(I_{A_{\omega}})$ be a toric variety corresponding to the ideal $I_{A_{\omega}}$ as in Theorem \ref{sagbi_crit2}. Let $P_{\omega}$ be the convex hull of all lattice points from $A_{\omega}$.
\begin{definition}\label{nice_term_order}
We will call a term order $\omega_1$ \emph{nicer} than $\omega_2$ when the dimension of the projective toric variety $\mathbb{V}(I_{A_{\omega_1}})$ is greater than that of $\mathbb{V}(I_{A_{\omega_2}})$, or when the dimensions are equal and the degree of $\mathbb{V}(I_{A_{\omega_1}})$ is greater than that of~$\mathbb{V}(I_{A_{\omega_2}})$. 
\end{definition}
Niceness is easier to verify than preferability since one only needs to compute the degree and dimension of the variety $\mathbb{V}(I_{A_{\omega}})$, which in the case of a toric variety is just the normalized volume and dimension of the associated polytope $P_{\omega}$. 

Assume $\mathcal{F}$ forms a
homogeneous SAGBI basis for $S = \KK[\mathcal{F}]$ with respect to some term order~$\omega$. Then the projective varieties $\mathbb{V}(I_{A_{\omega}})$ and $\mathrm{Proj}(\mathrm{in}_{\omega}(S))$ coincide. In particular, their degrees and dimensions are equal. This means that the dimension of $\mathbb{V}(I_{A_{\omega}})$  and the volume of $P_{\omega}$ are maximal among all possible values. 
That is, the term order $\omega$ is the \emph{nicest}.
However, $\mathcal{F}$ may not form a SAGBI basis with respect to any of the nicest term orders $\omega'$. This is only true if the toric variety $\mathbb{V}(I_{A_{\omega'}})$ is normal: the maximal possible dimension and volume of $P_{\omega'}$ will establish that the polytope $P_{\omega'}$ coincides with the polytope associated to the toric variety $\mathrm{Proj}(\mathrm{in}_{\omega'}(S))$. 
In other words, their Ehrhart polynomials must be equal, which for normal toric varieties is equivalent to the equality of Hilbert functions.


\section{Functionality}\label{sec4}

We have implemented the main Algorithms~\ref{alg:gb_detection} and~\ref{alg:SAGBI_detection} in two software packages, both named \texttt{SagbiGbDetection}~\cite{SagbiGbDetectionSource},\cite{SagbiGbDetectionJulia}, for Macaulay2~\cite{M2} and Julia~\cite{bezanson2017julia}.
We use this section to illustrate the basic functionality of both packages.

\subsection{\texttt{Macaulay2} Package}
The function \texttt{weightVectorsRealizingGB} takes as input a list of polynomials~$\mathcal{G} = \{ g_1,\dots,g_s\}$ and returns a list of weight vectors (representing the equivalence classes of term orders with respect to~$\sim_{\mathcal{G}}$) with respect to which these polynomials form a Gr\"obner basis for the ideal $\langle \mathcal{G} \rangle$. 
Polyhedral computations are performed using the package \texttt{Polyhedra}~\cite{birkner2009polyhedra}, implemented in the top-level Macaulay2 language, and Buchberger's $S$-pair criterion relies on normal form computation implemented in the core of Macualay2.
\begin{example}
Consider the polynomial ring~$\mathbb Q[x,y,z,w]$ and the ideal defining the twisted cubic in~$\mathbb P^3.$
\begin{lstlisting}[language=Macaulay2]
i1 : needsPackage "SagbiGbDetection";
i2 : R = QQ[x,y,z,w]; T = minors(2, matrix{{x,y,z},{y,z,w}});
i3 : C = first entries gens T;
i4 : weightVectorsRealizingGB C 
o4 = {{6, 4, 4, 6}, {6, 6, 3, 5}, {5, 3, 6, 6}, {3, 6, 6, 3}}
o4 : List
\end{lstlisting}
\end{example}
\begin{example}
If the output of \texttt{weightVectorsRealizingGB} is an empty list, then the given polynomials do not form a Gr\"obner basis for any term order.
\begin{lstlisting}[language=Macaulay2]
i1 : needsPackage "SagbiGbDetection";
i2 : P = QQ[x,y]; G = {x^2+y^2-1, 2*x*y-1};
i3 : weightVectorsRealizingGB G 
o3 = {}
o3 : List
\end{lstlisting}
\end{example}
Similarly, the function \texttt{weightVectorsRealizingSAGBI} takes as input a list of polynomials~$\mathcal{F} = \{ f_1,\dots,f_s\}$ and returns a list of term orders (representing the equivalence classes w.r.t.~$\sim_{\mathcal{F}}$) with respect to which these polynomials form a~SAGBI basis for the subalgebra~$\KK[\mathcal{F}]$.
This relies on the implementation of subduction provided by the package \texttt{SubalgebraBases}~\cite{burr2024subalgebrabases}.

\begin{example}
Let~$\mathcal{F} = \{ x,xy-y^2,x^2y\}$. Let~$\succ_1$ be a term order such that~$y \succ_1 x$, then~$\mathcal{F}$ is SAGBI basis for~$\KK[\mathcal{F}]$ with respect to~$\succ_1$. If~$\succ_2$ is a term order such that~$x \succ_2 y$, then the~initial algebra~$\mathrm{in}_{\succ_2}(\KK[\mathcal{F}]) = \KK[x, xy, xy^2, \dots]$ is not finitely generated.
\begin{lstlisting}[language=Macaulay2]
i1 : needsPackage "SagbiGbDetection";
i2 : P = QQ[x,y]; S = {x, x*y-y^2, x^2*y}; 
i3 : weightVectorsRealizingSAGBI S
o3 = {{1, 2}}
o3 : List
\end{lstlisting}
\end{example}
\begin{example}
    If the result of the function \texttt{weightVectorsRealizingSAGBI} is an empty list, then the given polynomials are not a SAGBI basis for any term order.
\begin{lstlisting}[language=Macaulay2]
i1 : needsPackage "SagbiGbDetection";
i2 : P = QQ[x,y]; F = {x+y, x*y, x*y^2};
i3 : weightVectorsRealizingSAGBI F
o3 = {}
o3 : List
\end{lstlisting}
\end{example}

\subsection{\texttt{Julia} and basic examples}
In \texttt{Julia}, we support polynomial rings and polynomials through the package \texttt{Singular.jl}, an interface to the Singular computer algebra system~\cite{decker2019singular}, which handles Gr\"{o}bner basis computations.
Additionally, we rely on \texttt{Polymake} and its \texttt{Julia} interface~\cite{polymake,polymake_interface} for polyhedral computations, and our own top-level implementation of the subduction algorithm. 

The two main functions in our \texttt{Julia} package have interfaces similar to the corresponding \texttt{Macaulay2} functions.

\begin{example}
The following is \cite[Example 3.9]{sturmfels1996grobner}.
\begin{lstlisting}[language=Julia]
using Singular, SagbiGbDetection
Q, (x,y,z) = Singular.polynomial_ring(Singular.QQ, ["x","y","z"]);
G = [x^5 + y^3 + z^2 - 1, x^2 + y^2 + z - 1, x^6 + y^5 + z^3 - 1];
weightVectorsRealizingGB(G,Q)
\end{lstlisting}
\begin{lstlisting}[language=Macaulay2]
1-element Vector{Vector{fmpz}}:
 [[12, 15, 27]]
\end{lstlisting}
\end{example}
\begin{example}
The algebra of symmetric polynomials has a finite SAGBI basis with respect to any term order.
\begin{lstlisting}[language=Julia]
using Singular, SagbiGbDetection
S, (x,y,z) = Singular.polynomial_ring(Singular.QQ, ["x","y","z"]);
Q = [x + y + z, x*y + x*z + y*z, x*y*z];
weightVectorsRealizingSAGBI(Q,S)
\end{lstlisting}
\begin{lstlisting}[language=Macaulay2]
6-element Vector{Vector{fmpz}}:
 [[3, 2, 1], [2, 3, 1], [3, 1, 2], [1, 3, 2], [2, 1, 3], [1, 2, 3]]
\end{lstlisting}
\end{example}
There are also additional functions: \texttt{extractWeightVectors}, that takes as input a list of polynomials~$\mathcal{F}$ and returns a list of term orders representing the equivalence classes w.r.t.~$\sim_{\mathcal{F}}$, and \texttt{isUniversalGb} (\texttt{isUniversalSAGBI}) which verify if the given set of polynomials forms a universal Gröbner (SAGBI) basis.
\section{Applications}\label{sec5}
We conclude by illustrating the use of our packages in the context of several examples coming from applied algebraic geometry. The code examples for this section have been summarized and are now accessible in the repository \cite{eq:mathrepo}.

\subsection{Examples from Algebraic Statistics} 
To model conditional independence statements for random vectors drawn from a multivariate normal distribution,
\[X = (X_1, \dots, X_n)\sim N(\mu,\Sigma)\] 
let us define $R\coloneqq\mathbb{R}[\sigma_{ij} \mid 1 \leq i < j \leq n]$, and let $\Sigma = (\sigma_{i,j})$ be a $n\times n$ symmetric matrix whose entries are filled with indeterminates.
Given pairwise disjoint $B_1, B_2, B_3 \subseteq [n]$, the Gaussian conditional independence ideal~\cite{SULLIVANT20091502},
$$I_{B_1 \, \rin \,  B_2 \mid B_3} \subseteq R$$ is generated by all maximal minors of the submatrix $\Sigma_{B_1\cup B_3, B_2 \cup B_3}$ with rows indexed by $B_1\cup B_3$ and columns indexed by $B_2 \cup B_3.$
When $B_3=\emptyset, $ we have~a~Gaussian independence ideal corresponding to the independence statement $ B_1  \rin B_2.$
We associate an ideal to a finite list of (conditional) independence statements by summing the ideals associated to each individual statement, as in the next~example.

\begin{example}
Let $n=3,$ and $C=\{1\, \rin \, 3,1\, \rin \, 3\mid2\},$ then \[
\Sigma = 
\begin{bmatrix}
\sigma_{11} & \boxed{\sigma_{12}} & \boxed{\sigma_{13}} \\
\sigma_{12} & \boxed{\sigma_{22}} & \boxed{\sigma_{23}} \\
\sigma_{13} & \sigma_{23} & \sigma_{33} \\
\end{bmatrix},
\]
where $I_C=\langle\sigma_{13},\sigma_{12}\sigma_{23}-\sigma_{22}\sigma_{13}\rangle$.
Using Algorithm~\ref{alg:gb_detection}, we deduce that the given generators form a Gr\"obner basis with respect to $\omega = (2, 3, 2, 3)$.    
\end{example}

An analogous, but more involved, class of examples from algebraic statistics are the Sullivant-Talaska ideals.
Gr\"{o}bner bases for these ideals have only recently been understood---we refer to~\cite{conner2023sullivanttalaska} for the definitions and recent results.
We consider the simplest example below.


\begin{example}
Consider the cycle graph $C_4$ on vertices $1,\ldots, 4.$ Let us pick two vertices~$1$ and $3$. We obtain a $3 \times 3 $ submatrix $[1, 3] \times [3, 1]$ of~$\Sigma$ as follows:
\[
\begin{bmatrix}
\boxed{\sigma_{11}} & \sigma_{12} & \boxed{\sigma_{13}} & \boxed{\sigma_{14}}  \\
\boxed{\sigma_{12}} & \sigma_{22} & \boxed{\sigma_{23}} & \boxed{\sigma_{24}}   \\
\boxed{\sigma_{13}} & \sigma_{23} &  \boxed{\sigma_{33}} & \boxed{\sigma_{34}}   \\
\sigma_{14} & \sigma_{24} & \sigma_{34} & \sigma_{44}   \\
\end{bmatrix}.
\]
Similarly, we can generate up $4$ such submatrices corresponding to the circular intervals $[1,3]=\{1,2,3\},$ $[2,4]=\{2,3,4\},$ $[3,1]=\{3,4,1\},$ $[4,2]=\{4,1,2\},$
whose determinants generate the Sullivant-Talaska ideal $I_4.$
Utilizing the package \texttt{SagbiGbDetection}, we have confirmed that these generators form a Gr\"obner basis for $9$ classes of term orders.
We refer to \cite{eq:mathrepo} for the code supporting this observation.




     





















\end{example}
 
\subsection{Grassmannians and their generalizations}

A classical result from invariant theory~\cite[Theorem 3.2.9]{invarianttheory} implies that the maximal minors of the general matrix of indeterminates~$(x_{ij})$ of size~$k\times n$,~$k<n$, form a SAGBI basis with respect to any diagonal term order, where a term order on~$\KK[\mathbf{x}]$ is called \emph{diagonal} if the product of terms on the main diagonal is the leading term of each~$(k\times k)$-minor. The subalgebra generated by all such minors is a homogeneous coordinate algebra of the Grassmannian $\mathrm{Gr}(k,n)$ of~$k$-dimensional subspaces in an~$n$-dimensional vector space in its \emph{Plücker embedding}. For more background, see \cite[Chapter 5]{jaBernd}.

Moreover, the maximal~$(k\times k)$-minors form a \emph{universal} Gr\"obner basis for the ideal they generate, i.e., it is a  Gr\"obner basis with respect to all term orders, see \cite{conca}. On the other hand, the Pl\"ucker coordinates are a universal SAGBI basis if~$k=2$, but this property fails already for~$k=3$, see \cite[Corollary 5.6]{SpeyerSturmfels}. More experiments with algebras of maximal minors are presented in the paper \cite{Bruns}.
\begin{example} For the Grassmannian~$\mathrm{Gr}(2, 4)$
the outputs of the functions \texttt{weightVectorsRealizingSAGBI} and \texttt{weightVectorsRealizingGB} are identical and consist of a 24-element vector of weights that represents all possible term orders up to equivalence, see \cite{eq:mathrepo}.
\end{example} 
In \cite[Theorem 3.2.9]{invarianttheory}, the condition that the minors are maximal is essential. For example, in \cite[Example 11.7]{sturmfels1996grobner}, Sturmfels shows that the~$(2\times 2)$-minors of a general~$(3\times 3)$-matrix do not form a SAGBI basis with respect to the diagonal term order. The question arises: do these minors form a SAGBI basis for some other term order? 
Applying our functions, we can answer this question affirmatively.
\begin{example}
Consider a~$(3\times 3)$-matrix~$(t_{ij})$ and its~$(2\times 2)$-minors. The outputs of the functions \texttt{weightVectorsRealizingSAGBI} and  \texttt{weightVectorsRealizingGB} show that the given set of polynomials forms a SAGBI basis for $6$ classes of term orders and a Gröbner basis for 96 classes of term orders,  see \cite{eq:mathrepo}.  



\end{example}
Another generalization of Grassmannians is presented in a recent work of Faulstich, Sturmfels, Sverrisdóttir \cite{svala}. The authors explore methods of \emph{coupled cluster theory} \cite{OsterFaul} arising in quantum chemistry and introduce a new type of projective unirational algebraic varieties~$V_{\sigma}$ in~$\PP^{{n \choose k} -1}$, called \emph{truncation varieties}, one for each subset~$\sigma$ of~$[k] = \{ 1, 2, \dots, k\}$. In particular, \cite[Theorem 3.5]{svala} states that~$V_{\{1\}} = \mathrm{Gr}(k, n)$.

    Many nice properties of Grassmannian varieties follow from the result that the Pl\"ucker coordinates form a SAGBI basis. In particular, SAGBI bases give rise to toric degenerations whose centra fibers are projective toric varieties, see e.g.~\cite[Section 2.3]{HUBER1998767} or \cite{CLARKE2020646, Makhlin} for details.
We checked whether the same property holds for the truncation variety~$V_{\{ 1, 3\}}$.
\begin{example}
 Consider the truncation variety~$V_{\{ 1, 3\}}$ with the parametrization given by the set of polynomials
 \[ 
 \begin{aligned}
Q = \{ &1, z_1, z_2, z_3, z_4, z_5, z_6, z_7, z_8, z_9, \\
      &(z_1z_5 - z_2z_4), (z_1z_6 - z_3z_4), (z_2z_6 - z_3z_5),\\
      &(z_1z_8 - z_2z_7), (z_1z_9 - z_3z_7), (z_2z_9 - z_3z_8),\\
      &(z_4z_8 - z_5z_7), (z_4z_9 - z_6z_7), (z_5z_9 - z_6z_8),\\
      &z_{10} + z_1(z_5z_9 - z_6z_8) - z_2(z_4z_9 - z_6z_7) + z_3(z_4z_8 - z_5z_7)\}.
 \end{aligned} 
 \]
See \cite[Chapter 3]{svala} for the construction of truncation varieties and \cite[Example 2.3]{svala} for the parameterization. The output of the function \texttt{weightVectorsRealizingSAGBI} for $t \cdot Q$  is empty, so~$t \cdot Q$ is not a SAGBI basis for any term order; see \cite{eq:mathrepo}.
\end{example}
\begin{remark}
This example suggests a natural question: does there exist a non-trivial truncation variety, i.e.~not a Grassmannian a linear space, which admits a SAGBI basis in its exponential parametrization presented in \cite{svala}.   
\end{remark}
\subsection{Algebras Generated by Principal minors}
In this subsection, we illustrate how we can sometimes efficiently find a SAGBI basis even if the original generating set of an algebra does not form one, using our notion of a nice term order in Definition \ref{nice_term_order}. 
The example concerns the principal minors of an arbitrary symmetric matrix $A \in \CC^{3\times 3}$, which we may think of as a point in $\mathbb{P}^5.$ 
Consider the projective principal minor map
\begin{align*}
    \gamma \colon \PP^5 &\to \PP^7\\
    A = [a_{ij}]\,&\mapsto \, \hbox{all the principal minors of } A.
\end{align*}
The closure of the image $\gamma$ is a hypersurface in $\PP^7$.
As observed by Holtz and Sturmfels~\cite{holtz2007hyperdeterminantal}, its implicit equation is given by Cayley's $2\times 2 \times 2$ hyperdeterminant. 
This hypersurface also has an interpretation in computer vision (see~\cite[\S 2.1]{larsson2020calibration}) as the locus of flatlander trifocal tensors for three pinhole projections $\PP^2 \dashrightarrow \PP^1$ with collinear centers. 
It has a dimension $6$ and a degree $4.$ We denote by $\mathcal{F}$ the set of 8~principal minors multiplied by the scaling variable $t$, and by $S := \CC[\mathcal{F}] \subset \CC[t, \mathbf{a}]$ its homogeneous coordinate ring.

Using the function \texttt{weightVectorsRealizingSAGBI}, we check that $\mathcal{F}$ does not form a~SAGBI basis for any term order. The function \texttt{extractWeightVectors} computes 14 equivalence classes of term orders with respect to $\mathcal{F}$. Among them, there are only 5 equivalence classes with respect to $S$. For each $\omega$ of these 5~term orders, we compute the dimension and the degree of the toric variety $Y_{\omega} := \mathrm{Spec}(\CC[\mathrm{in}_{\omega}(\mathcal{F})])$ and the SAGBI basis for $S$ with respect to~$\omega$.
\bigskip
\begin{center}
\begin{tabular}{ccccc} \toprule
    {$\dim Y_{\omega}$} & {$\deg Y_{\omega}$} & {SAGBI in degree $\leq 5,$} & {$\leq 6$} & {cardinality of reduced SAGBI basis} \\ 
    \midrule
    6  & 3 & \texttt{false} & \texttt{false} & --- \\
    6  & 2  & \texttt{true} & \texttt{true}  & 9  \\
    5  & 3  & \texttt{false} & \texttt{true}  & 11 \\
    4  & 4  & \texttt{false} & \texttt{true}  & 14    \\  
    3  & 6  & \texttt{false} & \texttt{false}  & 25    \\
    \bottomrule
\end{tabular}
\end{center}
\smallskip
We see that, in the cases where have verified that the algebra $\CC[\mathrm{in}_{\omega}(\mathcal{F})]$ is finitely generated in degree $\le 6$, we obtain the degree-wise and cardinality-wise smallest SAGBI basis for the second weight, for which the dimension of $\mathrm{dim} Y_{\omega}$ is maximized.
Note, however, that greed is not always good; the first row corresponds to the nicest class of term orders, but in this case, there is no SAGBI basis in degree $\le 6.$

\bibliographystyle{plain}
\bibliography{lib.bib}

\bigskip 

\noindent
\footnotesize
{\bf Authors' addresses:}

\smallskip

\noindent Viktoriia Borovik,
Universit\"at Osnabr\"uck
\hfill {\tt vborovik@uni-osnabrueck.de}

\noindent Timothy Duff,
University of Washington
\hfill {\tt timduff@uw.edu}

\noindent Elima Shehu,
Otto-von-Guericke-Universität Magdeburg
\hfill {\tt elima.shehu@ovgu.de}
\end{document}